\date{}
\newtheorem{Theorem}{Theorem}[section]
\newtheorem{Proposition}[Theorem]{Proposition}
\newtheorem{Lemma}[Theorem]{Lemma}
\theoremstyle{definition}
\newtheorem{Definition}[Theorem]{Definition}
\numberwithin{equation}{section}
\title{Coxeter group actions on Saalsch\"utzian 
${}_4F_3(1)$ series and very-well-poised 
${}_7F_6(1)$ series}
\author{Ilia D. Mishev
\footnote{Department of
Mathematics, University of Colorado at Boulder,
Campus Box 395, Boulder, CO 80309-0395,
U.S.A.
E-mail address: ilia.mishev@colorado.edu}}
\begin{document}

\maketitle

\begin{abstract}

In this paper we consider a function
$L(\vec{x})=L(a,b,c,d;e;f,g)$, which can
be written as a linear combination of
two Saalsch\"utzian ${}_4F_3(1)$ 
hypergeometric series or as a 
very-well-poised ${}_7F_6(1)$
hypergeometric series.
We explore two-term and three-term relations
satisfied by the $L$ function and put
them in the framework of group theory. 

We prove a fundamental two-term
relation satisfied by the $L$ function
and show that 
this relation 
implies that the Coxeter 
group $W(D_5)$, 
which has 1920 elements, is an
invariance group for $L(\vec{x})$.
The invariance relations
for $L(\vec{x})$ 
are 
classified into six types based on a
double coset decomposition of the
invariance group. 
The fundamental two-term relation
is shown to generalize classical
results about hypergeometric series.
We derive
Thomae's identity for
${}_3F_2(1)$ series, Bailey's identity
for terminating Saalsch\"utzian
${}_4F_3(1)$ series, and Barnes' second
lemma as consequences.

We further explore three-term relations
satisfied by $L(a,b,c,d;e;f,g)$. 
The group that governs the three-term
relations is shown to be isomorphic to
the Coxeter group $W(D_6)$, which has
23040 elements.
Based on the right cosets of $W(D_5)$ in
$W(D_6)$, we demonstrate the existence
of 220 three-term relations satisfied by
the $L$ function that fall into two families
according to the notion of $L$-coherence. 

\end{abstract}

\section{Introduction}

Hypergeometric series were first introduced by Gauss
\cite{Ga}, who studied ${}_2F_1$ series. Generalized
hypergeometric series 
of type ${}_AF_B$, where $A$ and $B$ are positive
integers,
were studied in the late nineteenth
and early twentieth century 
by Thomae \cite{T}, Barnes \cite{Bar1, Bar2},
Ramanujan (see \cite{Har}),
Whipple \cite{Wh1, Wh3, Wh2},
Bailey \cite{Ba1, Ba}, and others. 

There has been a renewed interest in
hypergeometric series over the last
twenty-five years. Relations among hypergeometric
and basic hypergeometric series were put
into a group-theoretic framework in papers
by Beyer et al.\ \cite{BLS}, Srinivasa
Rao et al.\ \cite{Rao}, 
Formichella et al.\ \cite{FGS},
Van der Jeugt
and Srinivasa Rao \cite{V}, Livens
and Van der Jeugt \cite{LJ1, LJ2}.
Other papers include 
Groenevelt \cite{Groe},
van de Bult et al.\ \cite{BRS}, and
Krattenthaler and Rivoal \cite{KR}.

Hypergeometric series have
also appeared in recent papers by Bump \cite{Bu}, 
Stade \cite{St1, St2, St3, St4},
and Stade and Taggart \cite{ST}
with applications
in the theory of automorphic functions. Other 
recent works,
with applications in physics, were written by
Drake \cite{D}, Grozin \cite{Gr}, and
Raynal \cite{R}.

The goal of this paper is to describe
two-term and three-term relations among
Saalsch\"utzian 
${}_4F_3(1)$ hypergeometric series and
put them in the framework of group theory.
We examine a function
$L(a,b,c,d;e;f,g)$ 
(see (\ref{220}) for the definition)
which is a linear combination
of two Saalsch\"utzian ${}_4F_3(1)$ series.
This particular linear combination
of two Saalsch\"utzian ${}_4F_3(1)$ series
appears in \cite{St2} in the evaluation
of the Mellin transform of a spherical
principal series $GL(4,\mathbb{R})$
Whittaker function.

In Section 3 we derive a 
fundamental two-term relation 
(see (\ref{340})) satisfied by
$L(a,b,c,d;e;f,g)$. 
The fundamental two-term relation (\ref{340})
is derived through a Barnes integral
representation of $L(a,b,c,d;e;f,g)$ and
generalizes both Thomae's identity (see \cite[p.\ 14]{Ba})
and Bailey's
identity (see \cite[Eq.\ $(10.11)$]{Wh3} or
\cite[p.\ 56]{Ba}) in the sense that the latter
two identities can be obtained
as limiting cases of our
fundamental two-term relation (see Section 5).  

In Section 4 we show that 
the two-term relation (\ref{340})
combined with the trivial invariances
of $L(a,b,c,d;e;f,g)$ under permutations
of $a,b,c,d$ and interchanging
$f,g$ implies that
the function $L(a,b,c,d;e;f,g)$ has an
invariance group $G_L$ isomorphic
to the Coxeter group $W(D_5)$,
which is of order 1920.
(See \cite{H} for general information
on Coxeter groups.)
The invariance group $G_L$ is 
given as a matrix
group of transformations of the affine
hyperplane
\begin{equation}
\label{110}
V=\{(a,b,c,d,e,f,g)^T \in
\mathbb{C}^7:
e+f+g-a-b-c-d=1\}.
\end{equation}
The 1920 invariances
of the $L$ function that follow 
from the invariance group $G_L$ are 
classified into six types based
on a double coset decomposition of $G_L$
with respect to its subgroup $\Sigma$ 
consisting of all the permutation matrices
in $G_L$. To the best of the author's
knowledge, using such a double coset
decomposition is a new way of describing 
all the relations 
induced by an invariance group
and does not appear explicitly 
in the literature before.  

Some consequences of the fundamental
two-term relation (\ref{340}) are
shown in Section 5. In particular,
as already mentioned, we show that
Thomae's and Bailey's identities
follow as limiting cases of 
(\ref{340}). We also show that
Barnes' second lemma 
(see \cite{Bar2} or 
\cite[p.\ 42]{Ba})
follows as
a special case of (\ref{340})
when we take $d=g$.

In Section 6 we lay the group-theoretic
foundation for the three-term relations
satisfied by $L(a,b,c,d;e;f,g)$. The three-term
relations are governed by the right coset
space $G_L \backslash M_L$ described
in that section. 
We also introduce the 
notion of $L$-coherence (see Definition \ref{D650}),
which will be used in
the classification of the
three-term relations in the following section.

Section 7 describes the three-term relations satisfied
by $L(a,b,c,d;e;f,g)$. We show that
for every $\sigma_1,\sigma_2,\sigma_3 \in M_L$
such that $\sigma_1,\sigma_2,\sigma_3$ are in
different right cosets of $G_L$ in $M_L$,
there exists a three-term relation involving
the functions
$L(\sigma_1\vec{x}),L(\sigma_2\vec{x})$,
and $L(\sigma_3\vec{x})$.
Thus we demonstrate the existence of 220
three-term relations. The 220 three-term relations
are shown to fall into two families based
on the notion of $L$-coherence. 
We explicitly find a three-term relation of
each family in (\ref{710}) and
(\ref{740}), and then show that every
other three-term relation is obtained from
one of those two through 
a change of variable
of the form 
$\vec{x} \mapsto \mu \vec{x}$ applied 
to all terms and coefficients. 

Versions of the $L$ function 
(in terms of very-well-poised
${}_7F_6(1)$ series, see (\ref{230}))
were examined in the past
by Bailey \cite{Ba1},
Whipple \cite{Wh2}, 
and Raynal \cite{R}. 
The $L$ function appears as 
a Wilson function 
(a nonpolynomial extension 
of the Wilson polynomial) in
\cite{Gr}.
Van de Bult et al.\ \cite{BRS} examine
generalizations to
elliptic, hyperbolic, and trigonometric
hypergeometric functions.

A basic hypergeometric series
analog of the $L$ function (in terms of
${}_8\phi_7$ series) was studied by 
Van der Jeugt and Srinivasa Rao \cite{V}
and by Livens and Van der Jeugt \cite{LJ1}.
The authors establish an invariance group 
isomorphic to $W(D_5)$
for the ${}_8\phi_7$ series, 
but do not classify all two-term
relations, or consider how they 
could imply results about
lower-order series. They also do not
observe the idea of $L$-coherence
and how it divides the three-term relations
into two families.

Very recently
Formichella et al.\ \cite{FGS} explored
a function $K(a;b,c,d;e,f,g)$ which is
a different linear combination 
of two Saalsch\"utzian
${}_4F_3(1)$ series from 
the function $L(a,b,c,d;e;f,g)$. 
The particular linear combination
of 
${}_4F_3(1)$ series studied by
Formichella et al.\ appears in the
theory of archimedean zeta integrals
for automorphic $L$ functions 
(see \cite{St4,ST}).
The function 
$K(a;b,c,d;e,f,g)$ behaves very differently
from $L(a,b,c,d;e;f,g)$. 
Formichella et al.\ obtain in \cite{FGS}
a two-term relation
satisfied by $K(a;b,c,d;e,f,g)$ and show that
their two-term relation implies
that the symmetric group $S_6$ is an
invariance group for $K(a;b,c,d;e,f,g)$.
In addition, the existence of 4960
three-term relations satisfied by the
$K$ function is demonstrated and the
4960 three-term relations are classified
into five families based on 
the notion of Hamming type.
In a future work by the author of the
present paper and by Green and Stade,
the connection between the $K$ and the $L$
functions will be studied. 

{\it Acknowledgments.} This paper is based on results
obtained in the author's Ph.D. thesis (see \cite{M})
at the University of Colorado at Boulder. The author
would like to acknowledge the guidance of his advisor 
Eric Stade as well as the discussions with R.M. Green from
the University of Colorado at Boulder and 
Robert S. Maier from
the University of Arizona.
  
\section{Hypergeometric series and Barnes integrals}

The hypergeometric series of type
${}_{p+1}F_p$ is the power series
in the complex variable $z$ defined by
\begin{equation}
\label{210}
{}_{p+1}F_p\left[
{\displaystyle a_1,a_2,\ldots, a_{p+1};
\atop
\displaystyle b_1,b_2,\ldots, b_p;}z\right]=
\sum_{n=0}^{\infty}\frac
{(a_1)_n(a_2)_n\cdots (a_{p+1})_n}
{n!(b_1)_n(b_2)_n\cdots (b_p)_n}z^n,
\end{equation}
where $p$ is a positive integer,
the numerator parameters 
$a_1,a_2,\ldots, a_{p+1}$ and the
denominator parameters
$b_1,b_2,\ldots, b_p$ are complex numbers,
and the rising factorial 
$(a)_n$ is given by
\begin{equation*}
(a)_n=\left\{
\begin{array}{rl}
a(a+1)\cdots(a+n-1)=
\frac{\Gamma(a+n)}{\Gamma(a)}, & n>0,\\
1, & n=0.
\end{array} \right.
\end{equation*}

The series in (\ref{210}) converges absolutely
if $|z|<1$. When $|z|=1$, the series converges
absolutely if $\mbox{Re}(\sum_{i=1}^{p} b_i - 
\sum_{i=1}^{p+1} a_i) > 0$ (see \cite[p.\ 8]{Ba}).
We assume that no denominator parameter is
a negative integer or zero. If a numerator 
parameter is a negative integer or zero, the
series has only finitely many nonzero terms and
is said to terminate. 

When $z=1$, the series is
said to be of unit argument and of type
${}_{p+1}F_p(1)$. If
$\sum_{i=1}^{p} b_i = 
\sum_{i=1}^{p+1} a_i + 1$, the series is called 
Saalsch\"utzian.
If $1+a_1=b_1+a_2= \ldots = b_p+a_{p+1}$,
the series is called well-poised. A well-poised
series that satisfies $a_2=1+\frac{1}{2}a_1$ is
called very-well-poised.

Our main object of study in this paper will be 
the function $L(a,b,c,d;e;f,g)$ defined by
\begin{eqnarray}
\label{220}
&&L(a,b,c,d;e;f,g) \nonumber \\
&&=\frac{{}_4F_3\left[
{\displaystyle a,b,c,d;
\atop
\displaystyle e,f,g;}1\right]}{\sin \pi e\ \Gamma(e)
\Gamma(f)\Gamma(g)\Gamma(1+a-e)\Gamma(1+b-e)\Gamma(1+c-e)
\Gamma(1+d-e)} \nonumber \\
&&-\frac{{}_4F_3\left[
{\displaystyle 1+a-e,1+b-e,1+c-e,1+d-e;
\atop
\displaystyle 1+f-e,1+g-e,2-e;}1\right]}
{\sin \pi e\ \Gamma(a)
\Gamma(b)\Gamma(c)\Gamma(d)\Gamma(1+f-e)\Gamma(1+g-e)
\Gamma(2-e)},
\end{eqnarray}
where $a,b,c,d,e,f,g \in \mathbb{C}$ satisfy
$e+f+g-a-b-c-d=1$.

The function $L(a,b,c,d;e;f,g)$ is a linear combination
of two Saalsch\"utzian ${}_4F_3(1)$ series.
Other notations we will use for 
$L(a,b,c,d;e;f,g)$ are
$L\left[{\displaystyle a,b,c,d; \atop
\displaystyle e;f,g}\right]$ and
$L(\vec{x})$, where 
we will always have
$\vec{x}=(a,b,c,d,e,f,g)^{T} \in V$
(see (\ref{110})).

It should be noted that by 
\cite[Eq.\ (7.5.3)]{Ba},
the $L$ function can be expressed as
a very-well-poised 
${}_7F_6(1)$ series:
\begin{align}
\label{230}
&L(a,b,c,d;e;f,g) \nonumber\\
{}\\
&=\frac{\Gamma(1+d+g-e)}
{\pi \Gamma(g)\Gamma(1+g-e)
\Gamma(f-d)\Gamma(1+a+d-e)
\Gamma(1+b+d-e)\Gamma(1+c+d-e)}
\nonumber\\
&\cdot {}_7F_6\left[
{\displaystyle d+g-e,
1+\frac{1}{2}(d+g-e),g-a,g-b,g-c,
d,1+d-e;
\atop
\displaystyle \frac{1}{2}(d+g-e),
1+a+d-e,1+b+d-e,1+c+d-e,1+g-e,g;}
1\right],\nonumber
\end{align}
provided that $\mbox{Re}(f-d)>0$.
Therefore our results on the $L$
function can also be interpreted in terms
of the very-well-poised
${}_7F_6(1)$ series given in
(\ref{230}). 

Fundamental to the derivation
of a nontrivial two-term relation 
for the
$L$ function will be the notion
of a Barnes integral, which
is a contour integral of the
form
\begin{equation}
\label{240}
\int_t \prod_{i=1}^{n}\Gamma^{\epsilon_i}(a_i+t)
\prod_{j=1}^{m}\Gamma^{\epsilon_j}(b_j-t) \,dt,
\end{equation}
where $n,m \in \mathbb{Z}^{+};
\epsilon_i,\epsilon_j = \pm 1;$
and $a_i,b_j,t \in \mathbb{C}$.
The path of integration is
the imaginary axis,
indented if necessary, so that any poles
of $\prod_{i=1}^{n}\Gamma^{\epsilon_i}(a_i+t)$
are to the left of the contour and
any poles of
$\prod_{j=1}^{m}\Gamma^{\epsilon_j}(b_j-t)$ are to
the right of the contour. This path of integration
always exists, provided that, for
$1 \leq i \leq n$ and $1 \leq j \leq m$, we
have $a_i+b_j \notin \{0,-1,-2,\ldots\}$ whenever
$\epsilon_i=\epsilon_j=1$. 

From now on, when we write an integral
of the form (\ref{240}),
we will always
mean a Barnes integral with a path of integration
as just described.

A Barnes integral can often be evaluated in terms of
hypergeometric series using the
Residue Theorem,
provided that we can establish the
necessary convergence arguments.
This is the approach we
take in the next section. 
We will make use of the extension of
Stirling's formula to the complex numbers 
(see \cite[Section 4.42]{Titch} or \cite[Section 13.6]{WW}):
\begin{equation}
\label{250} \Gamma (a+z) = \sqrt{2\pi} z^{a+z-1/2}e^{-z} (1+
\mbox{O} (1/|z|))
\mbox{ uniformly as } |z| \to \infty,
\end{equation}
provided that 
$-\pi +\delta \leq \arg (z) \leq \pi -
\delta, \; \delta \in (0,\pi)$.

When applying the Residue Theorem,
we will use 
the fact
that the gamma 
function has simple poles at
$t=-n, n=0,1,2,\dots$, with
\begin{equation}
\label{260}
\mbox{Res}_{t=-n}\Gamma(t)=\frac{(-1)^n}{n!}.
\end{equation}

When simplifying expressions
involving gamma functions, 
the reflection formula for the
gamma function will
often be
used:
\begin{equation}
\label{270}
\Gamma(t)\Gamma(1-t)=
\frac{\pi}{\sin \pi t}.
\end{equation}

Finally, we will use
a result about Barnes integrals
known as Barnes' lemma
(see \cite{Bar1} or \cite[p.\ 6]{Ba}):

\begin{Lemma}[Barnes' lemma]
\label{BL}

If
$\alpha,\beta,\gamma,\delta \in \mathbb{C}$,
we have
\begin{eqnarray}
\label{290}
\frac{1}{2 \pi i} \int_t
\Gamma(\alpha+t) \Gamma(\beta+t)
\Gamma(\gamma-t) \Gamma(\delta-t) \,dt
\nonumber\\
=\frac{\Gamma(\alpha+\gamma)\Gamma(\alpha+\delta)
\Gamma(\beta+\gamma)\Gamma(\beta+\delta)}
{\Gamma(\alpha+\beta+\gamma+\delta)},
\end{eqnarray}
provided that none of 
$\alpha+\gamma,\alpha+\delta,
\beta+\gamma$, and $\beta+\delta$ belongs to
$\{0,-1,-2,\ldots\}$.

\end{Lemma}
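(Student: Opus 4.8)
The plan is to evaluate the Barnes integral on the left-hand side of (\ref{290}) directly by means of the Residue Theorem, closing the contour of integration to the right and summing the residues collected there. The integrand has its ``left'' poles, coming from $\Gamma(\alpha+t)$ and $\Gamma(\beta+t)$, at the points $t=-\alpha-k$ and $t=-\beta-k$ for $k=0,1,2,\dots$, and its ``right'' poles, coming from $\Gamma(\gamma-t)$ and $\Gamma(\delta-t)$, at $t=\gamma+k$ and $t=\delta+k$. The hypotheses that none of $\alpha+\gamma,\alpha+\delta,\beta+\gamma,\beta+\delta$ lies in $\{0,-1,-2,\dots\}$ are exactly what guarantees that no left pole coincides with a right pole, so that the path of integration described after (\ref{240}) genuinely separates the two families. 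I would first prove the identity under the additional assumption $\mathrm{Re}(\alpha+\beta+\gamma+\delta)<1$, which is needed for convergence below, and then remove it at the end by analytic continuation, since both sides of (\ref{290}) are meromorphic in the parameters.

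The main obstacle is the convergence argument that justifies closing the contour. I would replace the straight contour by a sequence of large semicircles $C_R$ in the right half-plane, with radii chosen so that the arcs stay bounded away from all the poles of the integrand. On these arcs the integrand is controlled by the complex Stirling estimate (\ref{250}): the two factors $\Gamma(\alpha+t)\Gamma(\beta+t)$ grow while $\Gamma(\gamma-t)\Gamma(\delta-t)$ decay, and a careful bookkeeping of the powers of $|t|$ shows that, away from the poles, the integrand is $\mathrm{O}\!\left(|t|^{\,\mathrm{Re}(\alpha+\beta+\gamma+\delta)-2}\right)$; since the arcs have length $\mathrm{O}(R)$, the assumption $\mathrm{Re}(\alpha+\beta+\gamma+\delta)<1$ then forces the integral over $C_R$ to tend to $0$ as $R\to\infty$. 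This is the step that demands the most care, because the arcs pass through regions near the positive real axis where the gamma factors behave very differently from their behavior along vertical lines, so the estimate must be organized to hold uniformly on each arc.

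Granting that the contour may be closed, the Residue Theorem expresses the integral as $-2\pi i$ times the sum of all residues at the right poles. Using (\ref{260}) to compute $\mathrm{Res}_{t=\gamma+k}\Gamma(\gamma-t)$ and $\mathrm{Res}_{t=\delta+k}\Gamma(\delta-t)$, I would convert the two resulting sums into rising-factorial form. Each sum collapses to a Gauss series of unit argument: the poles at $t=\gamma+k$ contribute $\Gamma(\alpha+\gamma)\Gamma(\beta+\gamma)\Gamma(\delta-\gamma)\,{}_2F_1\left[{\alpha+\gamma,\beta+\gamma; \atop 1+\gamma-\delta;}1\right]$, and the poles at $t=\delta+k$ contribute the analogous expression with $\gamma$ and $\delta$ interchanged. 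Evaluating each ${}_2F_1(1)$ by the classical Gauss summation theorem, which is valid precisely because $\mathrm{Re}(\alpha+\beta+\gamma+\delta)<1$, produces two explicit products of gamma functions.

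It then remains to show that these two products add up to the single right-hand side of (\ref{290}). I would first apply the reflection formula (\ref{270}) to the pairs $\Gamma(\delta-\gamma)\Gamma(1+\gamma-\delta)$ and $\Gamma(\gamma-\delta)\Gamma(1+\delta-\gamma)$, and again to rewrite each reciprocal gamma such as $1/\Gamma(1-\alpha-\delta)$ as a multiple of $\Gamma(\alpha+\delta)$. This exhibits the common factor $\Gamma(\alpha+\gamma)\Gamma(\alpha+\delta)\Gamma(\beta+\gamma)\Gamma(\beta+\delta)$ together with $\Gamma(1-\alpha-\beta-\gamma-\delta)$, leaving a difference of two products of sines. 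Via the product-to-sum and sum-to-product identities this difference collapses to $\sin\pi(\alpha+\beta+\gamma+\delta)\,\sin\pi(\delta-\gamma)$, and after cancelling $\sin\pi(\delta-\gamma)$ one more application of (\ref{270}) turns $\Gamma(1-\alpha-\beta-\gamma-\delta)\sin\pi(\alpha+\beta+\gamma+\delta)/\pi$ into $1/\Gamma(\alpha+\beta+\gamma+\delta)$. This establishes (\ref{290}) under the auxiliary hypothesis, and analytic continuation in $\alpha,\beta,\gamma,\delta$ removes it, completing the proof.
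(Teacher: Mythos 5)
Your argument is sound, but note that the paper does not actually prove this lemma: it is quoted as a known result, with the proof deferred to Barnes \cite{Bar1} and to \cite[p.\ 6]{Ba}. What you have written is essentially the classical proof from those sources (and from Whittaker--Watson): close the contour to the right under the auxiliary hypothesis $\mathrm{Re}(\alpha+\beta+\gamma+\delta)<1$, sum the two families of residues into a pair of ${}_2F_1(1)$ series, evaluate each by Gauss's summation theorem, recombine via the reflection formula, and remove the auxiliary hypothesis by analytic continuation. All the key points are in place, including the one that genuinely requires care: the behavior of the integrand on the arcs near the positive real axis. The clean way to organize that estimate, consistent with how the paper itself argues in Proposition \ref{P310}, is to first apply (\ref{270}) to write $\Gamma(\gamma-t)\Gamma(\delta-t)$ as $\pi^2/\bigl(\Gamma(1-\gamma+t)\Gamma(1-\delta+t)\sin\pi(\gamma-t)\sin\pi(\delta-t)\bigr)$; Stirling then gives the ratio of gamma factors as $\mathrm{O}(|t|^{\mathrm{Re}(\alpha+\beta+\gamma+\delta)-2})$ uniformly on the right half-plane, while the sine factors are bounded below on arcs kept at fixed distance from their zeros, which is exactly your claimed bound. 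One small caveat you should make explicit: when $\gamma-\delta\in\mathbb{Z}$ the right-hand poles collide, $\Gamma(\delta-\gamma)$ is undefined, and the ${}_2F_1$ acquires a forbidden denominator parameter, so the residue computation should be carried out for generic parameters and the degenerate cases recovered in the same analytic-continuation step that removes the condition $\mathrm{Re}(\alpha+\beta+\gamma+\delta)<1$. With that said, your proposal is a correct and complete proof outline of a statement the paper leaves to the literature.
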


\section{Fundamental two-term relation}

In this section we show that the function
$L(a,b,c,d;e;f,g)$ defined in (\ref{220}) can
be represented as a Barnes integral. The Barnes integral
representation will then be used to derive a
fundamental two-term relation satisfied by the $L$
function.

\begin{Proposition}
\label{P310}
\begin{eqnarray}
\label{310}
&&L(a,b,c,d;e;f,g) \nonumber \\
&&=\frac{1}{\pi \Gamma(a)\Gamma(b)\Gamma(c)\Gamma(d)
\Gamma(1+a-e)\Gamma(1+b-e)\Gamma(1+c-e)\Gamma(1+d-e)}
\nonumber \\
&&\cdot \frac{1}{2\pi i}
\int_t\frac{\Gamma(a+t)\Gamma(b+t)\Gamma(c+t)\Gamma(d+t)
\Gamma(1-e-t)\Gamma(-t)}{\Gamma(f+t)\Gamma(g+t)}\,dt.
\end{eqnarray}

\end{Proposition}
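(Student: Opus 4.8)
The plan is to establish (\ref{310}) by evaluating the Barnes integral on the right-hand side directly: I would close the contour to the right, sum the residues via the Residue Theorem, and then check that the result coincides term by term with the definition (\ref{220}) of $L$.

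First I would locate the poles lying to the right of the contour. By the conventions following (\ref{240}), these are exactly the poles of the two ``reflected'' numerator factors $\Gamma(-t)$ and $\Gamma(1-e-t)$, which occur respectively at $t=n$ and $t=1-e+n$ for $n=0,1,2,\ldots$; the poles of $\Gamma(a+t),\ldots,\Gamma(d+t)$ lie to the left and do not contribute. Assuming the contour may be closed to the right, the integral equals $-1$ times the sum of the residues over these two families, the extra sign coming from the clockwise orientation.

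Next I would compute the two residue sums. Using (\ref{260}) together with the reflection formula (\ref{270}), the factors evaluated at the poles convert into rising factorials; for instance $\Gamma(1-e-n)=(-1)^n\Gamma(1-e)/(e)_n$, with an analogous identity at the second family of poles. After writing every $\Gamma(\cdot+n)$ as $(\cdot)_n\Gamma(\cdot)$, the residues of the first family assemble into a constant multiple of ${}_4F_3[a,b,c,d;e,f,g;1]$ and those of the second family into a constant multiple of ${}_4F_3[1+a-e,\ldots;1+f-e,1+g-e,2-e;1]$. Multiplying by the prefactor in (\ref{310}) and simplifying the two gamma constants again by (\ref{270}) --- using $\Gamma(e)\Gamma(1-e)\sin\pi e=\pi$ and $\Gamma(e-1)\Gamma(2-e)\sin\pi e=-\pi$ --- reproduces exactly the two terms, with their opposite signs, in (\ref{220}), which completes the identification.

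The main obstacle is justifying that the contour may in fact be closed to the right, i.e.\ that the integral over the large arcs vanishes. Here I would apply Stirling's formula (\ref{250}) to estimate the integrand, and the crucial input is the Saalsch\"utzian hypothesis $e+f+g-a-b-c-d=1$. A power count shows that the exponential factors $e^{\pm x}$ produced by the six numerator and two denominator gamma functions cancel, while the surviving power of $\mathrm{Re}(t)$ is precisely $(a+b+c+d-e-f-g)-1=-2$; thus the integrand decays like $|t|^{-2}$ to the right and like $e^{-2\pi|\mathrm{Im}(t)|}$ along vertical lines. Working with rectangular contours whose right edges pass between consecutive poles (say at $\mathrm{Re}(t)=N+\tfrac12$), these estimates force the top, bottom, and right contributions to tend to zero as $N\to\infty$, and they simultaneously guarantee absolute convergence of both the integral and the two ${}_4F_3(1)$ series. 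The residue bookkeeping itself is routine; the analytic estimate underwriting the contour closure is the one point that genuinely requires care.
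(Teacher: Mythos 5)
Your proposal is correct and follows essentially the same route as the paper: both evaluate the Barnes integral by closing the contour to the right (the paper uses large semicircles kept a fixed distance from $\mathbb{Z}\cup(\mathbb{Z}-e)$, you use rectangles through $\mathrm{Re}(t)=N+\tfrac12$), justify the closure via Stirling's formula combined with the Saalsch\"utzian condition to get $|t|^{-2}$ decay after reflecting the $\Gamma(\cdot-t)$ factors, and then sum the residues at the poles of $\Gamma(-t)$ and $\Gamma(1-e-t)$ to recover the two ${}_4F_3(1)$ series in (\ref{220}). Your explicit residue and sign bookkeeping, which the paper leaves implicit, checks out.
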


\begin{proof}

Let
\begin{eqnarray}
\label{330}
&&I\left[
{\displaystyle a,b,c,d;
\atop
\displaystyle e;f,g}\right]\nonumber\\
&&=\frac{1}{2\pi i}
\int_t\frac{\Gamma(a+t)\Gamma(b+t)\Gamma(c+t)\Gamma(d+t)
\Gamma(1-e-t)\Gamma(-t)}{\Gamma(f+t)\Gamma(g+t)}\,dt.
\end{eqnarray}

For $N \geq 1$, let $C_N$ be the semicircle of radius $\rho_N  $ on
the right side of the imaginary axis and center at the origin, chosen
in such a way that $\rho_N \to \infty$ as $N \to \infty$ and
$$
\varepsilon:= \inf_N  
\mbox{dist}(C_N, \mathbb{Z}\cup (\mathbb{Z}-e)) >0.
$$

The formula
(\ref{270}) gives
\begin{eqnarray*}
G(t):=\frac{\Gamma(a+t)\Gamma(b+t)\Gamma(c+t)\Gamma(d+t)
\Gamma(1-e-t)\Gamma(-t)}{\Gamma(f+t)\Gamma(g+t)}\\
=\frac{-\pi^2\Gamma(a+t)\Gamma(b+t)
\Gamma(c+t)\Gamma(d+t)}{\Gamma(f+t)\Gamma(g+t)
\Gamma(e+t)\Gamma(1+t)\sin \pi t \sin \pi (e+t)}.
\end{eqnarray*}

By Stirling's formula (\ref{250}),
$$\frac{\Gamma(a+t)\Gamma(b+t)
\Gamma(c+t)\Gamma(d+t)}{\Gamma(f+t)\Gamma(g+t)
\Gamma(e+t)\Gamma(1+t)} \sim
t^{a+b+c+d-e-f-g-1}=t^{-2}.$$

It follows that we can find a constant $K>0$ such that
$$ |G(t)| \leq K/|t|^2 \quad \text{if} \; \; t \in C_N, \;\; N=1,2,\ldots, $$
which implies
$$\int_{C_N} G(t) \,dt \to 0 \quad \mbox{as } N \to \infty.$$

Therefore the integral given by
$I\left[
{\displaystyle a,b,c,d;
\atop
\displaystyle e;f,g}\right]$ 
is equal to the sum of the
residues of the 
integrand at the poles of
$\Gamma(1-e-t)$ and $\Gamma(-t)$, which
yields the result.

\end{proof}

The fundamental two-term relation satisfied
by $L(a,b,c,d;e;f,g)$ is given in the
next proposition.

\begin{Proposition}
\label{P330}

\begin{equation}
\label{340}
L\left[
{\displaystyle a,b,c,d;
\atop
\displaystyle e;f,g}\right]
=L\left[
{\displaystyle a,b,g-c,g-d;
\atop 
\displaystyle 1+a+b-f;1+a+b-e,g}\right].
\end{equation}

\end{Proposition}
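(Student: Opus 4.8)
My plan is to prove (\ref{340}) by representing both sides as Barnes integrals via Proposition \ref{P310} and then reducing the resulting integral identity to a computation governed by Barnes' lemma (Lemma \ref{BL}). First I would record the right-hand parameters $a'=a$, $b'=b$, $c'=g-c$, $d'=g-d$, $e'=1+a+b-f$, $f'=1+a+b-e$, $g'=g$ and check that they satisfy $e'+f'+g'-a'-b'-c'-d'=1$, so that the right side is a bona fide $L$-value and (\ref{310}) applies to it. Substituting (\ref{310}) on both sides and simplifying the gamma prefactors with the Saalsch\"utzian relation (for instance $1+c'-e'=1+(g-c)-(1+a+b-f)=1+d-e$, and similarly $1+d'-e'=1+c-e$, $1+a'-e'=f-b$, $1+b'-e'=f-a$), the common factors $\Gamma(a)$, $\Gamma(b)$, $\Gamma(1+c-e)$, $\Gamma(1+d-e)$ cancel and (\ref{340}) reduces to the single integral identity
\[
\Gamma(g-c)\Gamma(g-d)\Gamma(f-a)\Gamma(f-b)\,I=\Gamma(c)\Gamma(d)\Gamma(1+a-e)\Gamma(1+b-e)\,I',
\]
where $I$ is the integral in (\ref{330}) and $I'$ is its analogue with the primed parameters. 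Written out, $I$ and $I'$ share the factors $\Gamma(a+t)$, $\Gamma(b+t)$, $\Gamma(-t)$ and $1/\Gamma(g+t)$, while the blocks $\Gamma(c+t)\Gamma(d+t)\Gamma(1-e-t)/\Gamma(f+t)$ and $\Gamma(g-c+t)\Gamma(g-d+t)\Gamma(f-a-b-t)/\Gamma(1+a+b-e+t)$ are what must be interchanged.

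Next I would introduce an auxiliary integration variable to bring Lemma \ref{BL} into play. The instance of Barnes' lemma with $\alpha=c$, $\beta=d$, $\gamma=f-c-d$, $\delta=t$ reads
\[
\frac{1}{2\pi i}\int_s \Gamma(c+s)\Gamma(d+s)\Gamma(f-c-d-s)\Gamma(t-s)\,ds=\frac{\Gamma(f-c)\Gamma(f-d)\,\Gamma(c+t)\Gamma(d+t)}{\Gamma(f+t)},
\]
so I may replace the factor $\Gamma(c+t)\Gamma(d+t)/\Gamma(f+t)$ inside $I$ by this $s$-integral, turning $I$ into a double Barnes integral. I would then justify interchanging the order of integration using the same Stirling estimate (\ref{250}) employed in the proof of Proposition \ref{P310}: the product of gamma quotients decays like a negative power of the integration variable on the semicircular contours, which supplies the absolute convergence needed for Fubini. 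After the interchange, the integration in the original variable $t$ is carried out and the outer $s$-integral is to be identified with $I'$.

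The step I expect to be the main obstacle is exactly this inner integration. The integrand of $I$ carries \emph{two} denominator gamma factors in $t$, namely $\Gamma(f+t)$ and $\Gamma(g+t)$; the substitution above absorbs $\Gamma(f+t)$ but leaves $\Gamma(g+t)$, so the remaining $t$-integral is not of the pure-numerator form to which (\ref{290}) applies directly, and no single invocation of Barnes' lemma will finish the job. Reproducing both denominator factors correctly, so that $\Gamma(f+t)$ and $\Gamma(g+t)$ turn into $\Gamma(1+a+b-e+s)$ and $\Gamma(g+s)$ in $I'$, is the heart of the matter, and it is precisely here that the reflection $c\mapsto g-c$, $d\mapsto g-d$ and the relation $e+f+g-a-b-c-d=1$ must conspire. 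I would handle the surviving integral by a further coordinated Barnes-type evaluation together with the reflection formula (\ref{270}) and a careful summation of residues, after which collecting the gamma factors yields exactly $I'$ with the coefficient displayed above. That the difficulty genuinely lives at this level is confirmed by the observation that specializing $d=g$ collapses the statement to Barnes' second lemma, which the paper recovers in Section~5 as a consequence of (\ref{340}).

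As an independent check on the bookkeeping I would verify the relation against the very-well-poised ${}_7F_6(1)$ form (\ref{230}): the transformation in (\ref{340}) ought to correspond to a known symmetry of the ${}_7F_6(1)$ series, giving a convenient way to confirm the gamma prefactors before and after the reflection.
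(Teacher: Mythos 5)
Your setup is sound and your bookkeeping is correct: the Saalsch\"utzian check on the primed parameters and the identifications $1+c'-e'=1+d-e$, $1+d'-e'=1+c-e$, $1+a'-e'=f-b$, $1+b'-e'=f-a$ do reduce (\ref{340}) to exactly the integral identity the paper proves as its equation (\ref{350}), so up to that point you are on the paper's track. The gap is precisely where you say it is, and you do not close it. Inserting a single Barnes-lemma expansion for $\Gamma(c+t)\Gamma(d+t)/\Gamma(f+t)$ leaves an inner $t$-integral still carrying $\Gamma(g+t)$ in the denominator, which is not of the four-numerator-gamma form (\ref{290}); your proposed remedy --- ``a further coordinated Barnes-type evaluation together with the reflection formula and a careful summation of residues'' --- defers to an unspecified computation the entire content of the proposition. (Note also that your grouping pairs $c,d$ with $f$; since the target transformation fixes $a,b,g$ and reflects $c,d$ in $g$, the natural pairing is $a,b$ with $f$ and $c,d$ with $g$.)

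The device that makes the computation close is to expand \emph{both} denominator ratios at once: write $\Gamma(a+t)\Gamma(b+t)/\Gamma(f+t)$ as a Barnes integral over an auxiliary variable $u$ and $\Gamma(c+t)\Gamma(d+t)/\Gamma(g+t)$ as one over $v$. After the Stirling/Fubini-justified interchange, the inner $t$-integral becomes
\[
\frac{1}{2\pi i}\int_t \Gamma(u+t)\Gamma(v+t)\Gamma(1-e-t)\Gamma(-t)\,dt
=\frac{\Gamma(u)\Gamma(1-e+u)\,\Gamma(v)\Gamma(1-e+v)}{\Gamma(1-e+u+v)},
\]
a clean third application of Lemma \ref{BL}, and one is left with an explicit double integral in $(u,v)$ representing the left side of (\ref{350}). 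The identity then follows not by performing further integration to recognize $I'$, but by observing that after the shift $v\mapsto c+d-f+v$ this double integral is \emph{literally invariant} under $(a,b,c,d;e;f,g)\mapsto(a,b,g-c,g-d;1+a+b-f;1+a+b-e,g)$, using $e+f+g-a-b-c-d=1$. Without the second auxiliary variable there is no symmetric expression to exhibit, and your outline offers no substitute for this step.
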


\begin{proof}

Let
$I\left[
{\displaystyle a,b,c,d;
\atop
\displaystyle e;f,g}\right]$
be as given in (\ref{330}). As a first step,
we will prove that 
\begin{eqnarray}
\label{350}
\frac{I\left[
{\displaystyle a,b,c,d;
\atop
\displaystyle e;f,g}\right]}{\Gamma(c)\Gamma(d)
\Gamma(1+a-e)\Gamma(1+b-e)}\nonumber\\
=\frac{I\left[
{\displaystyle a,b,g-c,g-d;
\atop
\displaystyle 1+a+b-f;1+a+b-e,g}\right]}{\Gamma(f-a)\Gamma(f-b)
\Gamma(g-c)\Gamma(g-d)}.
\end{eqnarray}

By Barnes' lemma,
\begin{align*}
&\frac{\Gamma(a+t)\Gamma(b+t)}
{\Gamma(f+t)}\\
=\frac{1}{2 \pi i\Gamma(f-a)\Gamma(f-b)}
\int_{u}&\Gamma(t+u)\Gamma(f-a-b+u)
\Gamma(a-u)\Gamma(b-u)\,du
\end{align*}
and
\begin{align*}
&\frac{\Gamma(c+t)\Gamma(d+t)}{\Gamma(g+t)}\\
=\frac{1}{2 \pi i\Gamma(g-c)\Gamma(g-d)}
\int_{v}&\Gamma(t+v)\Gamma(g-c-d+v)
\Gamma(c-v)\Gamma(d-v)\,dv.
\end{align*}

We re-write the integral for
$I\left[
{\displaystyle a,b,c,d;
\atop
\displaystyle e;f,g}\right]$ by substituting for the above expressions, 
changing the order of integration, so that we integrate 
with respect to $t$ first
(the change in the order of integration
is readily justified using Stirling's formula
and Fubini's theorem), 
and then applying Barnes' Lemma again 
to the integral with respect to $t$. We obtain
\begin{eqnarray}
\label{360}
&&\frac{I\left[
{\displaystyle a,b,c,d;
\atop
\displaystyle e;f,g}\right]}{\Gamma(c)\Gamma(d)
\Gamma(1+a-e)\Gamma(1+b-e)}\nonumber\\
&&=\frac{-1}
{4 \pi^2 \Gamma(c)\Gamma(d)
\Gamma(1+a-e)\Gamma(1+b-e)
\Gamma(f-a)\Gamma(f-b)
\Gamma(g-c)\Gamma(g-d)}\nonumber\\
&&\cdot
\int_{u} \Gamma(f-a-b+u)\Gamma(a-u)
\Gamma(b-u)\Gamma(u)\Gamma(1-e+u)\nonumber\\
&&\cdot \left( \int_{v} \frac
{\Gamma(g-c-d+v)\Gamma(c-v)
\Gamma(d-v)\Gamma(v)\Gamma(1-e+v)}
{\Gamma(1-e+u+v)}\,dv \right) du.
\end{eqnarray}

After the substitution 
$v \mapsto c+d-f+v$ in the inside integral,
it is easily checked 
(using the Saalsch\"utzian
condition
$e+f+g-a-b-c-d=1$)
that the
right-hand side of (\ref{360})
is invariant under the transformation
$$(a,b,c,d;e;f,g) \mapsto
(a,b,g-c,g-d;1+a+b-f;1+a+b-e,g),$$
which proves (\ref{350}). The result
in the proposition now follows 
immediately from 
(\ref{350}) upon writing the two $L$ functions
in (\ref{340}) in terms of their
Barnes integral representations (\ref{310}).

\end{proof}

\section{Invariance group}

In the previous section we showed that
the function $L(a,b,c,d;e;f,g)$ satisfies 
the two-term relation (\ref{340}). If we
define
\begin{equation}
\label{410}
A=\begin{pmatrix}
1 & 0 & 0 & 0 & 0 & 0 & 0\\
0 & 1 & 0 & 0 & 0 & 0 & 0\\
0 & 0 & -1 & 0 & 0 & 0 & 1\\
0 & 0 & 0 & -1 & 0 & 0 & 1\\
0 & 0 & -1 & -1 & 1 & 0 & 1\\
0 & 0 & -1 & -1 & 0 & 1 & 1\\
0 & 0 & 0 & 0 & 0 & 0 & 1\\
\end{pmatrix} 
\in GL(7,\mathbb{C}),
\end{equation}
then (\ref{340}) can be
expressed as
$L(\vec{x})=L(A\vec{x})$.

If $\sigma \in S_7$, we will
identify $\sigma$ with the matrix
in $GL(7,\mathbb{C})$ that
permutes the standard basis
$\{e_1,e_2,\ldots,e_7\}$ of the 
complex vector
space $\mathbb{C}^7$
according to the permutation
$\sigma$. For example,
$$(123)= \begin{pmatrix}
0 & 0 & 1 & 0 & 0 & 0 & 0\\
1 & 0 & 0 & 0 & 0 & 0 & 0\\
0 & 1 & 0 & 0 & 0 & 0 & 0\\
0 & 0 & 0 & 1 & 0 & 0 & 0\\
0 & 0 & 0 & 0 & 1 & 0 & 0\\
0 & 0 & 0 & 0 & 0 & 1 & 0\\
0 & 0 & 0 & 0 & 0 & 0 & 1\\
\end{pmatrix}.$$

Let
\begin{equation}
\label{420}
G_L=\langle (12),(23),
(34),(67),A \rangle
\leq GL(7,\mathbb{C}).
\end{equation}
The two-term relation
(\ref{340}) along 
with the trivial
invariances of 
the function
$L(a,b,c,d;e;f,g)$
under permutations of $a,b,c,d$ and
interchanging $f,g$ implies that
$G_L$ is an invariance group
for $L(a,b,c,d;e;f,g)$, i.e.
$L(\vec{x})=L(\alpha \vec{x})$
for every $\alpha \in G_L$.

The goal of this section is to
find the isomorphism type of
the group $G_L$ and further to 
describe the two-term relations
for the $L$ function in terms of
a double coset decomposition
of $G_L$ with respect to its
subgroup $\Sigma$ defined as
follows:
\begin{equation}
\label{430}
\Sigma=\langle (12),(23),
(34),(67) \rangle.
\end{equation}
The group $\Sigma$ is a
subgroup of $G_L$ consisting
of permutation matrices. It is
clear that $\Sigma \cong S_4 \times S_2$
and so $|\Sigma|=48$.
We note that if $\sigma \in \Sigma,
\alpha \in G_L$, the multiplication
$\sigma\alpha$ permutes the rows of
$\alpha$, and the multiplication
$\alpha\sigma$ permutes the columns of
$\alpha$.
A double coset of 
$\Sigma$ in $G_L$ 
is a set of the form
\begin{equation}
\label{440}
\Sigma \alpha \Sigma=
\{\sigma \alpha \tau : \sigma,\tau \in \Sigma\},
\mbox{ for some } \alpha \in G_L.
\end{equation}
The distinct double cosets
of the form (\ref{440}) partition
the group $G_L$ and give us
a double coset decomposition of
$G_L$ with respect to $\Sigma$. (See 
\cite[p. 119]{DF} for
more on double cosets.)

In Theorem \ref{T410} below we show
that the group $G_L$ is isomorphic
to the Coxeter group $W(D_5)$,
which is of
order 1920. In Theorem \ref{T420}
we show that the subgroup $\Sigma$
is the largest permutation subgroup
of $G_L$ and obtain a double coset
decomposition of $G_L$ with respect to
$\Sigma$.  
We list a representative for
each of the six double cosets obtained
and give the six invariance relations
induced by those representatives
(see (\ref{450})--(\ref{460})). 
The six invariance relations 
(\ref{450})--(\ref{460})
listed are
all the ``different" types of invariance
relations in the sense that every other invariance
relation can be obtained by permuting
the first four entries and 
permuting
the last two entries 
on the right-hand side
of a listed invariance relation 
(which corresponds
to permuting the rows of the accompanying matrix), and
by permuting $a,b,c,d$ and  permuting $f,g$
on the right-hand side
of a listed invariance relation 
(which corresponds to permuting the
columns of the accompanying matrix).
  
\begin{Theorem}
\label{T410}

The group $G_L$ is isomorphic to the Coxeter group 
$W(D_5)$, which is of order 1920.

\end{Theorem}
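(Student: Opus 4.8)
The plan is to realize $G_L$ as a finite reflection group of rank $5$ whose simple roots carry the Cartan matrix of type $D_5$, and then invoke faithfulness of the reflection representation. The crucial first observation is that, while $(12),(23),(34),(67)$ are reflections (each is a transposition matrix, with a one–dimensional $(-1)$–eigenspace), the generator $A$ is \emph{not}: computing $\ker(A+I)$ shows $A$ has a two–dimensional $(-1)$–eigenspace, so $A$ is a product of two commuting reflections and can never be a node of a simply–laced diagram on its own. I would therefore trade $A$ for $B:=A\,(34)$. Since $A$ and $(34)$ commute, $B$ is an involution with $A=B\,(34)$, so $G_L=\langle (12),(23),(34),(67),B\rangle$; and solving $B\vec{x}=-\vec{x}$ shows the $(-1)$–eigenspace of $B$ is the single line spanned by $e_3+e_4+e_5+e_6$, so $B$ is a genuine reflection. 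Now $G_L$ is generated by five reflections.

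Next I would compute the order of the product of each pair among $r_1=(67),\,r_2=B,\,r_3=(23),\,r_4=(12),\,r_5=(34)$. Pairs with disjoint support, together with the commuting pair $(34),B$ (note $(34)B=(34)A(34)=A=B\,(34)$), give order $2$; the chain relations give $(12)(23)$ and $(23)(34)$ of order $3$; and the only genuinely mixed products, $B\,(23)$ and $B\,(67)$, I would check by iterating the matrices to have order exactly $3$. These last two are the sole nontrivial calculations. Assembling the data, the Coxeter graph of $\{r_1,\dots,r_5\}$ has a unique branch node at $r_3=(23)$, with one leg $r_3-r_2-r_1$ of length two and two legs $r_3-r_4$ and $r_3-r_5$ of length one, and every bond equal to $3$. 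This is precisely the diagram of type $D_5$, so the five reflections satisfy the defining relations of $W(D_5)$ and there is a surjection $\pi\colon W(D_5)\twoheadrightarrow G_L$.

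To see that $\pi$ is injective I would exhibit the geometric representation. One checks directly that $\vec{x}_0=(0,0,0,0,1,0,0)$ is fixed by every generator, and more generally that the space $T$ of $G_L$–fixed vectors in $\mathbb{C}^7$ is two–dimensional, spanned by $\vec{x}_0$ and $(1,1,1,1,0,2,2)$. Choosing a $G_L$–invariant complement $W$ to $T$ gives a five–dimensional representation $\rho$; since each $r_i$ acts trivially on $T$, its whole one–dimensional $(-1)$–eigenspace lies in $W$, so each $r_i$ acts on $W$ as a reflection, and the five roots are linearly independent in $\mathbb{C}^7$, hence a basis of $W$. This $\rho$ is faithful: an element trivial on $W$ is automatically trivial on $T$ and hence on all of $\mathbb{C}^7$. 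Relative to any $G_L$–invariant inner product on $W$, two reflections whose product has order $m$ have roots meeting at angle $\pi-\pi/m$, so by the orders computed above the roots $\alpha_1,\dots,\alpha_5$ form a simple system with Cartan matrix exactly that of $D_5$. Therefore $\rho(G_L)=\langle\rho(r_1),\dots,\rho(r_5)\rangle$ is the reflection group of a $D_5$ simple system, namely $W(D_5)$, and faithfulness yields $G_L\cong W(D_5)$, of order $2^{4}\cdot 5!=1920$.

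The main obstacle is conceptual rather than computational: recognizing that $A$ must be replaced by the reflection $B=A\,(34)$ before any $D_5$ structure appears. With the naive generating set $\{(12),(23),(34),(67),A\}$ one instead finds $A\,(23)$ of order $4$, which misleadingly suggests a non–simply–laced type; the substitution is exactly what turns that order–$4$ product into the two order–$3$ bonds of the fork. The only other point needing care is confirming that $B\,(23)$ and $B\,(67)$ have order $3$ (not $4$); once this is verified, linear independence of the five roots forces the invariant form on $W$ to be nondegenerate with the roots as a rank–$5$ simple system, so the faithful image is all of $W(D_5)$ rather than a proper reflection subgroup.
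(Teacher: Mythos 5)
Your argument is correct, and its first half coincides with the paper's: since $A$ and $(34)$ commute, your $B=A\,(34)$ is exactly the paper's generator $a_3=(34)A$, and both proofs verify the same $D_5$ Coxeter relations among $(12),(23),(34),(67),B$ (fork at $(23)$, tail $B-(67)$) to obtain a surjection $\varphi\colon W(D_5)\twoheadrightarrow G_L$. Where you genuinely diverge is in proving injectivity. The paper exploits finiteness: any proper quotient of $W(D_5)$ has order at most $960$, and it exhibits more than $960$ elements of $G_L$ by computing the sizes of the double cosets $\Sigma A\Sigma$ and $\Sigma[(123)(67)A]^2\Sigma$ (each $12\cdot 48=576$). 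You instead build the reflection representation: the fixed subspace is $2$-dimensional (your spanning vectors check out), each generator acts on an invariant complement $W$ as a reflection whose root lies in $W$, the five roots $e_1-e_2$, $e_2-e_3$, $e_3-e_4$, $e_6-e_7$, $e_3+e_4+e_5+e_6$ are linearly independent with Gram matrix of type $D_5$ relative to an averaged invariant inner product (after the usual sign normalization of the roots, which is possible because the diagram is a tree), and faithfulness of the geometric representation of a Coxeter group (Humphreys, \S 5.4) finishes the job; your order-$3$ claims for $B(23)$ and $B(67)$ and order-$4$ claim for $A(23)$ are all correct. Your route is more structural: it explains \emph{why} the answer is $W(D_5)$ by exhibiting the $5$-dimensional reflection representation and its simple system, at the cost of the eigenspace and Gram-matrix computations and an appeal to the faithfulness theorem. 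The paper's count is more elementary and recycles the double-coset machinery that Theorem \ref{T420} needs anyway. Your opening observation -- that $A$ itself has a two-dimensional $(-1)$-eigenspace, hence is not a reflection and must be traded for $(34)A$ before any $D_5$ structure appears -- is implicit in the paper's choice of $a_3$ but is never remarked upon there.
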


\begin{proof}

The Dynkin diagram of the Coxeter
group $W(D_n)$ is given by the graph
with vertices labeled 
$1',1,2,\ldots,n$, where
$i,j \in \{1,2,\ldots,n\}$ are connected
by an edge if 
and only if $|i-j|=1$, and
$1'$ is connected to $2$ only.
The presentation of $W(D_n)$ is
given by
$$W(D_n)=\langle s_{1'},s_1,s_2,\ldots,s_n:
(s_is_j)^{m_{ij}}=1 \rangle,$$
where
$m_{ii}=1$ for all $i$; and 
for $i$ and $j$ distinct,
$m_{ij}=3$ if $i$ and $j$ are
connected by an edge, and 
$m_{ij}$=2 otherwise.
It is well-known that the order of $W(D_n)$ is
$2^{n-1}n!$ (see \cite[Section $2.11$]{H}). 

Consider the generators of $G_L$ given by
\begin{equation}
\label{470}
a_1=(34),a_2=(23),a_3=(34)A,
a_4=(67), a_{1'}=(12).
\end{equation}
A direct computation shows that 
$$(a_ia_j)^{m_{ij}}=1, \quad \mbox{for all }
i,j \in \{1,2,3,4,1'\}.$$

Therefore if we define $\varphi(s_i)=a_i$ for 
every $i \in \{1,2,3,4,1'\}$, $\varphi$
extends (uniquely) to a surjective homomorphism
from $W(D_5)$ onto $G_L$. 
Since $W(D_5)$ is a finite
group, we need to show that $G_L$ and $W(D_5)$ have
the same order to complete the proof. To that end, 
it is enough to just show that 
$|G_L| > 960=\frac{|W(D_5)|}{2}$.
An estimate on the order of $G_L$ is obtained
by computing the sizes of the distinct double cosets
$\Sigma A \Sigma$ and 
$\Sigma [(123)(67)A]^2 \Sigma$
of $\Sigma$ in $G_L$,
where $\Sigma$ is as given in (\ref{430}).
Indeed, by permuting columns that are different as
multisets and then permuting the rows of the resulting 
matrices
in every possible way, we see that
the number of matrices that
belong to each of those
double cosets 
is $12 \cdot 48$, which
shows that $|G_L|>960$ and
completes the proof. 

\end{proof}

As stated before Theorem \ref{T410},
we are interested in the complete
double coset decomposition of $G_L$
with respect to $\Sigma$ since this
will classify all the invariance
relations for the function
$L(a,b,c,d;e;f,g)$ in a convenient way.
We use the same  
technique as in the proof of
Theorem \ref{T410} 
given by permuting columns that are different as
multisets and then permuting the rows of the resulting 
matrices
in every possible way. 
We find that there are
six double cosets of $\Sigma$ in $G_L$.
Representative matrices for the double cosets are
$I_7, A, [(123)(67)A]^2, 
[(123)(67)A]^3, [(123)A]^3,
[(123)(67)A]^4$. The corresponding
double coset sizes are 
$1 \cdot 48, 12 \cdot 48, 12 \cdot 48,
12 \cdot 48, 2 \cdot 48, 1 \cdot 48$.
Furthermore, the representative
matrices are all seen to have different entries,  
so that
$\Sigma$ must indeed be the largest permutation
subgroup of $G_L$. 
Each representative matrix gives rise to
an invariance relation. 
Theorem \ref{T420} summarizes the result.
 
\begin{Theorem}
\label{T420}

Let 
$\Sigma$ be as defined in (\ref{430}). Then
$\Sigma$ consists of all the permutation matrices
in $G_L$. There are six double
cosets in the double coset decomposition
of $G_L$ with respect to $\Sigma$.
Representative matrices for the double cosets
are $I_7, A, [(123)(67)A]^2, 
[(123)(67)A]^3, [(123)A]^3,
[(123)(67)A]^4$ and the corresponding
double coset sizes are
$1 \cdot 48, 12 \cdot 48, 12 \cdot 48,
12 \cdot 48, 2 \cdot 48, 1 \cdot 48$.
The corresponding invariances of the $L$ function are
given by
\begin{align}
L\left[
{\displaystyle a,b,c,d;
\atop
\displaystyle e;f,g}\right] &=
L\left[
{\displaystyle a,b,c,d;
\atop
\displaystyle e;f,g}\right],
\label{450}\\
L\left[
{\displaystyle a,b,c,d;
\atop
\displaystyle e;f,g}\right] &=
L\left[
{\displaystyle a,b,g-c,g-d;
\atop
\displaystyle 1+a+b-f;1+a+b-e,g}\right],
\label{452}\\
L\left[
{\displaystyle a,b,c,d;
\atop
\displaystyle e;f,g}\right] &=
L\left[
{\displaystyle 1+a-e,g-c,a,f-c;
\atop
\displaystyle 1+a-c;1+a+b-e,1+a+d-e}\right],
\label{454}\\
L\left[
{\displaystyle a,b,c,d;
\atop
\displaystyle e;f,g}\right] &=
L\left[
{\displaystyle 1+d-e,1+a-e,g-c,g-b;
\atop
\displaystyle 1+g-b-c;1+a+d-e,1+g-e}\right],
\label{456}\\
L\left[
{\displaystyle a,b,c,d;
\atop
\displaystyle e;f,g}\right] &=
L\left[
{\displaystyle g-a,g-b,g-c,g-d;
\atop
\displaystyle 1+g-f;1+g-e,g}\right],
\label{458}\\
L\left[
{\displaystyle a,b,c,d;
\atop
\displaystyle e;f,g}\right] &=
L\left[
{\displaystyle 1+c-e,1+d-e,1+a-e,1+b-e;
\atop
\displaystyle 2-e;1+g-e,1+f-e}\right].
\label{460}
\end{align}

\end{Theorem}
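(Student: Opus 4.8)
The plan is to exhibit the six double cosets explicitly through the given representatives, compute the size of each by a direct row-and-column rearrangement count, and then use $|G_L| = 1920$ from Theorem \ref{T410} together with the fact that double cosets partition $G_L$ to conclude that no further double cosets can exist. Concretely, I would first compute the six representative matrices $I_7, A, [(123)(67)A]^2, [(123)(67)A]^3, [(123)A]^3, [(123)(67)A]^4$ as explicit elements of $GL(7,\mathbb{C})$ by multiplying out the given words in $A$ (as in (\ref{410})) and the permutation matrices, which act on $\mathbb{C}^7$ as described at the start of Section 4.

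The central computation is the size of each double coset $\Sigma\alpha\Sigma$. The key observation is that, for $\sigma,\tau \in \Sigma$, left multiplication $\sigma\alpha$ permutes the rows of $\alpha$ within the two blocks $\{1,2,3,4\}$ and $\{6,7\}$ (with row $5$ fixed), while right multiplication $\alpha\tau$ permutes the columns within those same two blocks; thus $\Sigma\alpha\Sigma$ is exactly the set of matrices obtained from $\alpha$ by all such rearrangements. Since the number of distinct right cosets $\Sigma\alpha\tau$ equals the number of column arrangements of $\alpha$ counted up to row permutation, and since row permutations preserve the entry-multiset of each column, I would count $|\Sigma\alpha\Sigma|$ in two stages: the number of distinct arrangements of the columns' entry-multisets within each block (checking in each case that columns sharing an entry-multiset are indeed carried onto one another by a row permutation lying in $\Sigma$), times the number of distinct row permutations, which is $|\Sigma| = 48$ whenever no two rows of $\alpha$ coincide within a block. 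For $A$, for instance, columns $3$ and $4$ share the entry-multiset $\{-1,-1,-1,0,0,0,0\}$ while columns $1,2$ share $\{1,0,0,0,0,0,0\}$, giving $4!/(2!\,2!) = 6$ arrangements in the first block and $2$ in the $\{6,7\}$ block, hence $12$ column-multiset arrangements and a coset of size $12\cdot 48$, matching (\ref{452}). Carrying out the analogous count for each representative should yield the sizes $1\cdot 48,\,12\cdot 48,\,12\cdot 48,\,12\cdot 48,\,2\cdot 48,\,1\cdot 48$.

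To finish, I would verify that the six double cosets are pairwise distinct and that their sizes sum to $1920$. Distinctness follows because the multiset of all entries of a matrix is invariant under row and column permutations, so it suffices to check that the six representatives are pairwise distinguishable; where two share this coarse invariant (as may happen among the three cosets of size $12\cdot 48$), I would distinguish them by checking directly that one is not a row-and-column rearrangement of the other. Since $1 + 12 + 12 + 12 + 2 + 1 = 40$ and $40\cdot 48 = 1920 = |G_L|$, the six disjoint double cosets exhaust $G_L$, so there are no others. The permutation-matrix claim is then immediate: any permutation matrix in $G_L$ lies in one of the six cosets, but rearrangement cannot turn a matrix carrying a $-1$ into a $0/1$ matrix, so only $\Sigma I_7\Sigma = \Sigma$ contains permutation matrices, whence $\Sigma$ is precisely the set of permutation matrices in $G_L$. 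Finally, I would read off the relations (\ref{450})--(\ref{460}) by applying each representative to $\vec{x} = (a,b,c,d,e,f,g)^T$ and invoking $L(\vec{x}) = L(\alpha\vec{x})$ for $\alpha \in G_L$.

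The main obstacle I anticipate is the bookkeeping: correctly computing the higher powers such as $[(123)(67)A]^4$ and their entry patterns, and—more delicately—certifying that the three cosets of size $12\cdot 48$ are genuinely distinct rather than coincidentally sharing an entry-multiset, which requires a finer comparison than the global multiset invariant. Because completeness of the decomposition is forced purely by the numerical identity $40\cdot 48 = 1920$, a single erroneous representative or size would silently break the argument, so every matrix product and every count must be carried out exactly.
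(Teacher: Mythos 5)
Your proposal is correct and follows essentially the same route as the paper: the paper likewise computes the double coset sizes by permuting the columns that differ as multisets and then permuting the rows of the resulting matrices in every possible way, notes that the non-identity representatives contain entries other than $0$ and $1$ so that $\Sigma$ is the full permutation subgroup, and reads off the invariances from the representatives. The only difference is explicitness---you spell out the completeness check $40\cdot 48=1920=|G_L|$ and the pairwise-distinctness verification that the paper leaves implicit.
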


\section{Applications of the
fundamental two-term relation}

In this section we prove
some consequences of the fundamental
two-term relation given in
Proposition \ref{P330}. As a first
step, we write 
the two $L$ functions
in (\ref{340}) in terms of
their definitions 
as linear combinations of two
${}_4F_3(1)$ series. 
We obtain
\begin{eqnarray}
\label{510}
\frac{{}_4F_3\left[
{\displaystyle a,b,c,d;
\atop
\displaystyle e,f,g;}1\right]}
{\sin \pi e\ \Gamma(e)\Gamma(f)
\Gamma(g)\Gamma(1+a-e)
\Gamma(1+b-e)\Gamma(1+c-e)
\Gamma(1+d-e)}\nonumber\\
-\frac{{}_4F_3\left[
{\displaystyle 1+a-e,1+b-e,
1+c-e,1+d-e;
\atop
\displaystyle 1+f-e,
1+g-e,2-e;}1\right]}
{\sin \pi e\ \Gamma(a)\Gamma(b)
\Gamma(c)\Gamma(d)\Gamma(1+f-e)
\Gamma(1+g-e)\Gamma(2-e)}\nonumber\\
=\frac{{}_4F_3\left[
{\displaystyle a,b,g-c,g-d;
\atop
\displaystyle 1+a+b-f,
1+a+b-e,g;}1\right]}
{\left[
{\displaystyle \sin \pi (1+a+b-f) 
\Gamma(1+a+b-f)
\Gamma(1+a+b-e)\Gamma(g)
\atop
\displaystyle \cdot
\Gamma(f-b)\Gamma(f-a)
\Gamma(1+d-e)\Gamma(1+c-e)
}\right]
}\nonumber\\
-\frac{{}_4F_3\left[
{\displaystyle f-b, f-a, 1+d-e, 1+c-e;
\atop
\displaystyle 1+f-e,f+g-a-b,1+f-a-b}1
\right]}
{\left[
{\displaystyle \sin \pi (1+a+b-f) 
\Gamma(a)\Gamma(b)
\Gamma(g-c)\Gamma(g-d)
\atop
\displaystyle \cdot
\Gamma(1+f-e)
\Gamma(f+g-a-b)\Gamma(1+f-a-b)
}\right]
}.
\end{eqnarray}

We fix $b,c,d,f,g \in
\mathbb{C}$ in such a way that
\begin{equation}
\label{520}
\mbox{Re}(f+g-b-c-d)>0, \quad
\mbox{Re}(f-b)>0.
\end{equation}
Let $a \in \mathbb{C}$ and 
let $e=1+a+b+c+d-f-g$ depend
on $a$. In equation 
(\ref{510}) we let
$|a| \to \infty$. Using Stirling's 
formula (\ref{250}) and the
conditions (\ref{520}), we obtain
\begin{eqnarray}
\label{530}
\frac{{}_3F_2\left[
{\displaystyle b,c,d;
\atop
\displaystyle f,g;}1\right]}
{\Gamma(f)\Gamma(g)
\Gamma(f+g-b-c-d)}
\nonumber\\
=\frac{{}_3F_2\left[
{\displaystyle b,g-c,g-d;
\atop
\displaystyle f+g-c-d,g;}
1\right]}
{\Gamma(f+g-c-d)\Gamma(g)
\Gamma(f-b)}.
\end{eqnarray}
We note that the conditions
(\ref{520}) are needed for
the absolute convergence of
the two ${}_3F_2(1)$ series
in (\ref{530}).
Applying
(\ref{530}) twice yields 
Thomae's identity
\begin{eqnarray}
\label{540}
\frac{{}_3F_2\left[
{\displaystyle b,c,d;
\atop
\displaystyle f,g;}1\right]}
{\Gamma(f)\Gamma(g)
\Gamma(f+g-b-c-d)}
\nonumber\\
=\frac{{}_3F_2\left[
{\displaystyle f-b,g-b,f+g-b-c-d;
\atop
\displaystyle f+g-b-d,f+g-b-c;}
1\right]}
{\Gamma(b)\Gamma(f+g-b-d)
\Gamma(f+g-b-c)}.
\end{eqnarray}
In fact, applying
(\ref{540}) twice gives
(\ref{530}), so that
(\ref{530}) and (\ref{540})
are equivalent.

Next in equation (\ref{510}) we let
$a \to -n$, where $n$ is a
nonnegative integer. Using the fact
that $\lim_{a \to -n}\frac{1}{\Gamma(a)}
=0$ and then formula (\ref{270}) to
simplify the result, we obtain
Bailey's identity
\begin{align}
\label{550}
&{}_4F_3\left[
{\displaystyle -n,b,c,d;
\atop
\displaystyle e,f,g;}
1\right]\nonumber\\
=\frac{(e-b)_n(f-b)_n}
{(e)_n(f)_n}
&{}_4F_3\left[
{\displaystyle -n,b,g-c,g-d;
\atop
\displaystyle 1-n+b-f,1-n+b-e,g;}
1\right], 
\end{align}
which holds provided that
$e+f+g-b-c-d+n=1$.

Thomae's and Bailey's identities
have been shown 
in \cite{FGS}
in a similar way
to be limiting cases of a fundamental
two-term relation 
satisfied by the
function $K(a;b,c,d;e,f,g)$. 

As a final application, in the 
fundamental two-term relation
(\ref{340}) we let $d=g$.
We express the left-hand side
as a Barnes integral 
according to Proposition \ref{310},
and we write
the right-hand side in terms 
of two ${}_4F_3(1)$ series
according to the 
definition
of the $L$ function.
The condition $d=g$ causes
one of the terms on the right-hand 
side to go to zero and the
${}_4F_3(1)$ series in the other
term to be trivially equal to one.
If we simplify the result further using
(\ref{270}),
we obtain
\begin{eqnarray}
\label{560}
\frac{1}{2 \pi i}
\int_t\frac{\Gamma(a+t)\Gamma(b+t)
\Gamma(c+t)\Gamma(1-e-t)\Gamma(-t)}
{\Gamma(f+t)}\,dt\nonumber\\
=\frac{\Gamma(a)\Gamma(b)\Gamma(c)
\Gamma(1+a-e)\Gamma(1+b-e)\Gamma(1+c-e)}
{\Gamma(f-a)\Gamma(f-b)\Gamma(f-c)},
\end{eqnarray}
which holds provided that 
$e+f-a-b-c=1$. The equation (\ref{560})
is precisely 
the statement of Barnes' second lemma.

\section{Group-theoretic structure of the
three-term relations}

Let 
\begin{equation}
\label{610}
M_L=\langle (12),(23),(34),(56),(67),A \rangle
\leq GL(7,\mathbb{C}).
\end{equation}
The group $M_L$ is generated by the invariance
group $G_L$ and the transposition matrix $(56)$.
We will show that the right cosets of $G_L$ in $M_L$
govern the three-term relations for the function
$L(a,b,c,d;e;f,g)$. In particular, 
in the next section we will show that
for every $\sigma_1,\sigma_2,\sigma_3 \in M_L$
such that $\sigma_1,\sigma_2,\sigma_3$ are in
different right cosets of $G_L$ in $M_L$,
there exists a three-term relation involving
the functions
$L(\sigma_1\vec{x}),L(\sigma_2\vec{x})$,
and $L(\sigma_3\vec{x})$.

In this section we lay the foundation
of the group-theoretic structure of the three-term
relations satisfied by the $L$ function
by studying the right coset space
$G_L \backslash M_L$ and then introducing the
notion of $L$-coherence (see Definition \ref{D650}),
which will lead us to the classification of the
three-term relations through Proposition \ref{P660}.

\begin{Theorem}

\label{T610}

The group $M_L$ is isomorphic to the
Coxeter group $W(D_6)$, which has
23040 elements.

\end{Theorem}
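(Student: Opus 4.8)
The plan is to mirror the proof of Theorem~\ref{T410}, extending the realization of $W(D_5)$ as $G_L$ to a realization of $W(D_6)$ as $M_L$. Recall from the proof of Theorem~\ref{T410} that the Dynkin diagram of $W(D_6)$ has nodes $1',1,2,3,4,5$, with $1'$ and $1$ both joined to $2$ and a tail $2-3-4-5$, and that its order is $2^{5}\cdot 6!=23040$. Since $M_L=\langle G_L,(56)\rangle$, the natural candidate generators are the five generators $a_{1'}=(12)$, $a_1=(34)$, $a_2=(23)$, $a_3=(34)A$, $a_4=(67)$ already used for $G_L$, together with the new generator $a_5=(56)$ attached at the end of the tail (node $5$).

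First I would check that these six matrices satisfy the Coxeter relations of $W(D_6)$. All relations among $a_{1'},a_1,a_2,a_3,a_4$ are inherited verbatim from Theorem~\ref{T410}, since these are the same matrices viewed inside $G_L\le M_L$. The only new relations involve $a_5$: one has $a_5^2=I_7$ trivially; the braid relation $(a_4a_5)^3=I_7$ holds because $a_4a_5=(67)(56)$ is a $3$-cycle; and $a_5$ must commute with $a_{1'},a_1,a_2,a_3$ (i.e.\ $m_{5j}=2$ for these $j$). Commutation with $a_{1'}=(12)$, $a_1=(34)$, $a_2=(23)$ is immediate, as these are transpositions disjoint from $(56)$, and commutation with $a_3=(34)A$ reduces, using that $(56)$ and $(34)$ are disjoint, to the single matrix identity $(56)A=A(56)$. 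This is a direct verification: both sides send $(a,b,c,d,e,f,g)^T$ to $(a,b,g-c,g-d,f+g-c-d,e+g-c-d,g)^T$. Hence $\varphi(s_i)=a_i$ extends to a surjective homomorphism $\varphi\colon W(D_6)\twoheadrightarrow M_L$.

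It then remains to prove that $\varphi$ is injective, which, since $W(D_6)$ is finite, amounts to the order estimate $|M_L|>\tfrac12|W(D_6)|=11520$: any nontrivial normal subgroup of $W(D_6)$ has order at least $2$, so every proper quotient has order at most $11520$, and exceeding this bound forces $\ker\varphi=\{I_7\}$. To obtain the bound I would reuse the double-coset technique of Theorems~\ref{T410} and~\ref{T420}, now relative to the permutation subgroup $\Sigma'=\langle(12),(23),(34),(56),(67)\rangle\cong S_4\times S_3$ of order $144$: exhibiting distinct double cosets $\Sigma'\alpha\Sigma'$ whose sizes sum past $11520$ (equivalently, producing at least seven distinct right cosets $G_L\sigma$, which already gives $|M_L|\ge 7\,|G_L|=13440$) completes the argument. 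Once $\ker\varphi$ is trivial, $\varphi$ is an isomorphism and $|M_L|=23040$.

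I expect the order estimate to be the main obstacle. Verifying the presentation is a short and essentially mechanical computation, whereas the lower bound on $|M_L|$ requires care in producing genuinely distinct double cosets (or distinct cosets of $G_L$) and in computing their sizes correctly, which is exactly the delicate bookkeeping that already arose in the proof of Theorem~\ref{T410}.
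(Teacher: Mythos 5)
Your proposal is correct and follows essentially the same route as the paper, which likewise adjoins $a_5=(56)$ to the generators of Theorem \ref{T410}, verifies the $W(D_6)$ Coxeter relations to get a surjection $W(D_6)\twoheadrightarrow M_L$, and finishes with an order estimate via the counting technique of that earlier proof. Your added details (the explicit check $(56)A=A(56)$ and the observation that seven distinct right cosets of $G_L$ already force $|M_L|\ge 13440>11520$) are sound and, if anything, make the order estimate cleaner than the double-coset count the paper gestures at.
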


\begin{proof}

The proof follows the same lines as the
proof of Theorem \ref{T410}. 
We let $a_1, a_2, a_3, a_4,a_{1'}$ be as
in (\ref{470}), and we also
let $a_5=(56)$. We obtain a surjective
homomorphism from $W(D_6)$ onto $M_L$,
which homomorphism is then shown to
be an isomorphism by estimating the
order of $M_L$ using the counting
technique from the proof of Theorem \ref{T410}.

\end{proof}

There are $23040 / 1920 = 12$ right cosets
of $G_L$ in $M_L$. The group $M_L$ acts by
right multiplication on those right cosets
through
$$(G_L \mu) \cdot \nu = G_L (\mu\nu), \quad
\mbox{for } \mu,\nu \in M_L.$$

It is well-know that the Coxeter group $W(D_6)$
has a center consisting of two elements
(see \cite[pp.\ 82 and 132]{H}). 
We let $w_0$ denote
the unique nonidentity element in the center
of $M_L$. The element $w_0$ is called the central involution.
The following proposition follows from standard facts about
the Coxeter group $W(D_n)$ (see \cite[Section 2.10]{H}):

\begin{Proposition}
\label{P620}

We can label the $12$ right cosets of $G_L$ in $M_L$
by $$1,2, \ldots, 6, \bar{1}, \bar{2}, \ldots, \bar{6}$$
in such a way that $i$ and $\bar{i}$ are interchanged
under the action of $w_0$,
for every $i \in \{1,2,\ldots,6\}$.

\end{Proposition}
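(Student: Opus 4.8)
The plan is to realize $M_L\cong W(D_6)$ in its standard reflection representation and to identify $G_L$ with a point stabilizer, so that the twelve cosets become a concrete twelve-element $M_L$-set on which $w_0$ acts visibly as a fixed-point-free involution pairing the elements. Concretely, I would model $W(D_6)$ as the group of signed permutations of $\{e_1,\dots,e_6\}$ with an even number of sign changes, with roots $\{\pm e_i\pm e_j\}$ and simple roots $\alpha_1=e_1-e_2,\ \alpha_2=e_2-e_3,\ \dots,\ \alpha_5=e_5-e_6,\ \alpha_6=e_5+e_6$. Matching this Dynkin diagram against the one used in the proof of Theorem~\ref{T610}, the trivalent node corresponds to $\alpha_4$, its two prongs to $\alpha_5,\alpha_6$ (the generators $a_{1'},a_1$), and the tail $\alpha_3,\alpha_2,\alpha_1$ to $a_3,a_4,a_5$. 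In particular the generator $a_5=(56)$ that distinguishes $M_L$ from $G_L$ corresponds to the tail-end simple reflection $s_{\alpha_1}$.

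Granting this matching, $G_L=\langle a_{1'},a_1,a_2,a_3,a_4\rangle$ is the standard maximal parabolic subgroup obtained by deleting the node $\alpha_1$, and I would identify it with the stabilizer in $M_L$ of the vector $e_1$: an even signed permutation fixes $e_1$ if and only if it fixes the first coordinate with positive sign and permutes and signs $e_2,\dots,e_6$ with an even number of sign changes, i.e.\ if and only if it lies in the copy of $W(D_5)$ acting on $e_2,\dots,e_6$. Being a point stabilizer, $G_L$ makes the right coset space $G_L\backslash M_L$ an $M_L$-equivariant copy of the orbit $M_L\cdot e_1$, which one checks directly to be the set of twelve signed basis vectors $\{\pm e_1,\dots,\pm e_6\}$ (matching $23040/1920=12$). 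I then label the coset corresponding to $e_i$ by $i$ and the coset corresponding to $-e_i$ by $\bar i$.

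Finally, since $n=6$ is even, the element $-I$ negating every coordinate lies in $W(D_6)$, is the longest element, and is central, so $w_0=-I$. Transporting the right-multiplication action on $G_L\backslash M_L$ through the bijection above---and using $w_0^{-1}=w_0$ together with centrality, so the left/right distinction is immaterial---the action of $w_0$ on the orbit is just $x\mapsto -x$, which sends $e_i\mapsto -e_i$ and hence interchanges the cosets labeled $i$ and $\bar i$ for every $i\in\{1,\dots,6\}$. This is exactly the assertion of the proposition.

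The step I expect to demand the most care is the first: matching the abstract generators $a_{1'},a_1,\dots,a_5$ to the geometric simple reflections so as to be certain that the deleted node $a_5$ is the tail-end node $s_{\alpha_1}$ and not a prong. This matters because deleting a prong would instead produce the half-spin orbit of size $2^{5}=32$ rather than the vector orbit of size $12$; since $[M_L:G_L]=12$ already forces the vector orbit, I would use this index computation as a consistency check on the matching. The remaining steps---identifying the point stabilizer, computing the orbit, and reading off the negation action of $-I$---are then routine consequences of the signed-permutation model and the standard facts cited from \cite{H}.
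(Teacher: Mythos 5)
Your proof is correct and takes essentially the same route as the paper: the paper justifies this proposition by citing the standard semidirect-product (signed-permutation) realization of $W(D_n)$ from \cite[Section 2.10]{H}, under which $G_L$ is the $D_5$ parabolic stabilizing a signed basis vector, the coset space is the $12$-element orbit $\{\pm e_1,\dots,\pm e_6\}$, and the central involution acts as $-I$ --- exactly the facts you spell out. The only difference is that the paper then makes the labeling concrete by computing $w_0$ and explicit coset representatives as matrices, whereas you argue abstractly; your index-$12$ consistency check correctly pins down which parabolic $G_L$ is.
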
  

Indeed, the central involution $w_0$ is
computed to be
$$w_0=(12)(34)
[[(1234)(567)]^2A]^4,$$
and from here, representatives
of the twelve right cosets
$6,\dots,1,\bar{6},\ldots,\bar{1}$ 
are computed to be
$$\mu_6=I_7, \quad \mu_5=(56), \quad 
\mu_4=(57),$$
$$\mu_3=w_2, \quad \mu_2=(56)w_2, \quad 
\mu_1=(57)w_2,$$
$$\mu_{\bar{6}}=w_0, \quad \mu_{\bar{5}}=(56)w_0, \quad 
\mu_{\bar{4}}=(57)w_0,$$
$$\mu_{\bar{3}}=w_1, 
\quad \mu_{\bar{2}}=(56)w_1, 
\quad \mu_{\bar{1}}=(57)w_1,$$
respectively,
where
$$w_1=(1234)
[(1234)(567)A]^3(1432),$$
$$w_2=w_0w_1.$$

Every matrix $\mu \in M_L$ induces a permutation of the
twelve element set 
$\{1, \ldots , 6, \bar{1}, \ldots ,
\bar{6} \}$.
Let $\Phi:M_L \to S_{12}$ be the induced permutation
representation.
We note that if $\mu \in M_L$, then
the permutation $\Phi(\mu)$ can be uniquely
described by specifying its effect on the
set $\{1,\ldots,6\}$, since the elements
in $\{\bar{1},\ldots,\bar{6}\}$ will be
permuted based on where $\{1,\ldots,6\}$
are permuted with the addition or 
omission of a bar.
We give the
images under $\Phi$ 
of the generators of $M_L$ in 
the following proposition: 

\begin{Proposition}
\label{P630}

The images of the generators
$a_1,a_2,a_3,a_4,a_5$, and $a_{1'}$ of $M_L$ under
the permutation representation 
$\Phi:M_L \to S_{12}$ are given by
\begin{align*}
\Phi(a_1)=\Phi((34))=
\left( \begin{matrix}
1 & 2 & 3 & 4 & 5 & 6\\
2 & 1 & 3 & 4 & 5 & 6\\
\end{matrix} \right),\\
\Phi(a_2)=\Phi((23))= 
\left( \begin{matrix}
1 & 2 & 3 & 4 & 5 & 6\\
1 & 3 & 2 & 4 & 5 & 6\\
\end{matrix} \right),\\
\Phi(a_3)=\Phi((34)A)= 
\left( \begin{matrix}
1 & 2 & 3 & 4 & 5 & 6\\
1 & 2 & 4 & 3 & 5 & 6\\
\end{matrix} \right),\\
\Phi(a_4)=\Phi((67))= 
\left( \begin{matrix}
1 & 2 & 3 & 4 & 5 & 6\\
1 & 2 & 3 & 5 & 4 & 6\\
\end{matrix} \right),\\
\Phi(a_5)=\Phi((56))= 
\left( \begin{matrix}
1 & 2 & 3 & 4 & 5 & 6\\
1 & 2 & 3 & 4 & 6 & 5\\
\end{matrix} \right),\\
\Phi(a_{1'})=\Phi((12))= 
\left( \begin{matrix}
1 & 2 & 3 & 4 & 5 & 6\\
\bar{2} & \bar{1} & 3 & 4 & 5 & 6
\end{matrix} \right).
\end{align*}

\end{Proposition}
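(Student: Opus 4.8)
The plan is to compute each of the six permutations straight from the definition of $\Phi$ as the right-multiplication action of $M_L$ on the coset space $G_L\backslash M_L$, organizing the bookkeeping around two reductions and one membership test.

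First I would record the reduction that removes the barred labels. Since $w_0$ is central in $M_L$, the permutation $\Phi(w_0)$ commutes with $\Phi(g)$ for every generator $g$, and by Proposition \ref{P620} it is the fixed-point-free involution $i\leftrightarrow\bar i$. Hence $\Phi(g)(\bar i)=\overline{\Phi(g)(i)}$ with the convention $\overline{\bar j}=j$; that is, $\Phi(g)$ is completely determined by its restriction to $\{1,\dots,6\}$, exactly as asserted in the displayed permutations. It therefore suffices, for each generator $g\in\{a_1,\dots,a_5,a_{1'}\}$ and each $i\in\{1,\dots,6\}$, to locate the label $j$ with $G_L\mu_i g=G_L\mu_j$, i.e.\ with $\mu_i g\,\mu_j^{-1}\in G_L$.

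Next I would dispatch the cases supported on the indices $\{5,6,7\}$, which are elementary. The representatives $\mu_6=I_7$, $\mu_5=(56)$, $\mu_4=(57)$ and the generators $a_4=(67)$, $a_5=(56)$ all live on $\{5,6,7\}$, while $a_1=(34)$, $a_2=(23)$, $a_{1'}=(12)$ have disjoint support and $A$ (hence $a_3=(34)A$) commutes with $(56)$. Using these facts together with Theorem \ref{T420} (that $\Sigma$ is precisely the set of permutation matrices in $G_L$), each product $\mu_i g$ for $i\in\{4,5,6\}$ collapses to a permutation of $\{5,6,7\}$ whose coset is decided by checking membership in $\Sigma$; for instance $\Phi(a_5)$ fixes $4$ because $(57)(56)(57)=(67)\in G_L$, and swaps $5,6$ because $(56)^2=1$. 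This already yields the images of the cosets $4,5,6$ under every generator.

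The genuine work, and the main obstacle, is the images of the cosets $1,2,3$ (those built from $w_2=w_0w_1$) and the fork behaviour of $a_{1'}$, which mixes $\{1,2\}$ with $\{\bar1,\bar2\}$; here $\mu_i g\,\mu_j^{-1}$ is a long word in $w_0,w_1,A$ and cannot be tested against the $1920$ elements of $G_L$ by brute force. To make the membership test search-free I would pass to the $6$-dimensional linear space $V_0=\{(a,b,c,d,e,f,g)^T:e+f+g-a-b-c-d=0\}$, which every matrix in $M_L$ preserves and on which $M_L\cong W(D_6)$ acts as its reflection (equivalently, standard) representation. In that representation there is a single $M_L$-orbit of twelve weight vectors $\pm v_1,\dots,\pm v_6$ permuted as signed permutations, with $\mathrm{Stab}(v_6)=G_L$ and $w_0=-\mathrm{id}$, so that the coset $G_L\mu_i$ is identified with the vector labelled $i$ (and $-v_i$ with $\bar i$). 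Locating these twelve vectors explicitly in coordinates reduces each membership test to applying a single $7\times7$ matrix to one vector and reading off which $\pm v_j$ results. Carrying out this orbit computation for each generator, and confirming that $a_k$ interchanges exactly the adjacent pair indexed by $k$ while $a_{1'}$ acts as the sign-changing reflection on the first two coordinates, produces the tabulated permutations; matching the Dynkin labelling to the coordinate labelling is the final consistency check to perform.
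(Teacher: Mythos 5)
Your strategy is correct and would yield a valid proof, but it is a genuinely different route from the paper's. The paper states Proposition \ref{P630} as a bare computational fact: having computed $w_0$, $w_1$, $w_2$ and the twelve representatives $\mu_i$ as explicit $7\times 7$ matrices, it leaves the reader to determine each coset $G_L\mu_i g$ by direct matrix work (the semidirect-product realization of $W(D_n)$ is invoked only afterwards, to justify Proposition \ref{P640}). You instead transport the whole computation to the $6$-dimensional space $V_0$, where $M_L$ acts by its reflection representation, $G_L$ is the stabilizer of a single vector (e.g.\ $(1,1,1,1,0,2,2)^T$, which one checks is fixed by $(12),(23),(34),(67)$ and $A$), $w_0$ acts as $-\mathrm{id}$, and the twelve cosets become the twelve vectors of one orbit; each membership test then collapses to one matrix--vector product. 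This is cleaner and simultaneously explains Proposition \ref{P640}, and your preliminary reductions (the bar-equivariance from centrality of $w_0$, and the elementary handling of cosets $4,5,6$) all check out. Two points deserve more than the passing mention you give them: first, the right-coset action $G_L\mu\cdot\nu=G_L\mu\nu$ corresponds under $G_L\mu\mapsto\mu^{-1}v$ to applying $\nu^{-1}$ to vectors --- harmless since every generator is an involution, but it matters when you label the orbit; second, the ``final consistency check'' of matching labels is not optional, since the proposition asserts a specific table relative to the paper's representatives $\mu_1,\ldots,\mu_{\bar 1}$, so you must actually verify that the vector you call $v_i$ equals $\mu_i^{-1}v_6$ (using $\mu_{\bar i}=\mu_i w_0$), and that the orbit has twelve distinct vectors so that the stabilizer is exactly $G_L$ and not an overgroup. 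Without that last step you have only determined $\Phi$ up to a relabelling of $\{1,\ldots,6\}$.
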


Given the decomposition
of an element $\mu \in M_L$ in
terms of the generators
$a_1,a_2,a_3,a_4,a_5,a_{1'}$,
Proposition (\ref{P630}) can be
used to find the permutation
corresponding to $\Phi(\mu)$.

The next proposition is a 
restatement of standard facts about
the construction of the
Coxeter group $W(D_n)$ as
a semidirect product
(see \cite[Section 2.10]{H} for
more details).

\begin{Proposition}
\label{P640}

The permutation representation $\Phi:M_L \to S_{12}$
is faithful, i.e. $\emph{ker}(\Phi)=\{I_7\}$.
The embedding of $M_L$ into $S_{12}$ consists of all
permutations of the form 
$\left( \begin{matrix}
1 & 2 & 3 & 4 & 5 & 6\\
j_1 & j_2 & j_3 & j_4 & j_5 & j_6\\
\end{matrix} \right)$, where each
$j_i$ belongs to the set 
$\{1, \ldots , 6,\bar{1}, \ldots , \bar{6}\}$,
the $j_i$'s are all distinct if we remove the bars,
and an even number, i.e. 0, 2, 4, or 6, of the
$j_i$'s contain a bar.

\end{Proposition}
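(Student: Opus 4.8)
The plan is to identify the permutation representation $\Phi$ with the standard realization of $W(D_6)$ as a group of even-signed permutations, and then read off both the faithfulness and the explicit description of the image from that identification. First I would set up a dictionary between the twelve coset labels and signed symbols by declaring $i \leftrightarrow +i$ and $\bar{i} \leftrightarrow -i$ for $i \in \{1,\ldots,6\}$, so that a permutation of $\{1,\ldots,6,\bar{1},\ldots,\bar{6}\}$ commuting with the bar involution is precisely a signed permutation of $\{1,\ldots,6\}$. By Proposition \ref{P620} the central involution $w_0$ sends $i \leftrightarrow \bar{i}$ for every $i$, so $\Phi(w_0)$ is the total sign change; since $w_0 \in Z(M_L)$, every $\Phi(\mu)$ commutes with $\Phi(w_0)$ and hence preserves the pairing $\{i,\bar{i}\}$, i.e.\ is a signed permutation. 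This already justifies the remark preceding the proposition that $\Phi(\mu)$ is determined by its action on $\{1,\ldots,6\}$, and it supplies the ``distinct after removing bars'' part of the description.

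Next I would pin down the image. Reading the generator images off Proposition \ref{P630}, the elements $\Phi(a_1),\ldots,\Phi(a_5)$ are the adjacent transpositions $(1\,2),(2\,3),(3\,4),(4\,5),(5\,6)$ acting on the unbarred labels with no sign changes, while $\Phi(a_{1'})$ is the signed transposition sending $1 \mapsto \bar{2}$ and $2 \mapsto \bar{1}$. These are exactly the standard Coxeter generators of $W(D_6)$ in its signed-permutation realization: the first five generate an unsigned copy of $S_6$, and the last one attaches to the fork node and introduces a pair of sign changes. In particular every generator induces an even number of sign changes, and since the parity of the number of bars is a homomorphism from the signed permutation group $W(B_6)$ onto $\mathbb{Z}/2$ whose kernel is the even-signed subgroup, the whole image $\mathrm{Im}(\Phi)$ lies in that even-signed subgroup. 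This gives the claim that an even number of the $j_i$ carry a bar.

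To finish I would invoke the standard semidirect-product construction $W(D_n) = (\mathbb{Z}/2)^{n-1} \rtimes S_n$ (as in \cite[Section 2.10]{H}): the even-signed permutations of $\{1,\ldots,6\}$ form a group of order $2^5 \cdot 6! = 23040$, and the five adjacent transpositions together with the signed transposition above generate all of it. Hence $\mathrm{Im}(\Phi)$ is exactly the even-signed permutation group, which is the set described in the statement, so every admissible signed permutation does arise. Comparing orders then closes the argument: $\Phi$ maps $M_L$, which has order $23040$ by Theorem \ref{T610}, onto a group of order $23040$, so $\Phi$ is a bijection and $\ker(\Phi) = \{I_7\}$. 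Equivalently, once $\Phi$ is matched with the tautologically faithful action of $W(D_6)$ on the twelve vectors $\{\pm e_i\}$, faithfulness is immediate.

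The main obstacle I anticipate is the bookkeeping needed to confirm that the generator images of Proposition \ref{P630}, under the chosen $i \leftrightarrow +i$, $\bar{i} \leftrightarrow -i$ dictionary, really do coincide with the standard Coxeter generators of $W(D_6)$ with the correct adjacencies and signs, i.e.\ that $\Phi(a_{1'})$ commutes with $\Phi(a_1)$ but has an order-three product with $\Phi(a_2)$ while contributing exactly two sign changes. Once that matching is verified, everything else is either standard Coxeter theory or a one-line order count, so no genuine estimate is required.
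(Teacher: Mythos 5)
Your argument is correct and follows essentially the route the paper intends: the paper gives no proof of Proposition \ref{P640} beyond declaring it a restatement of the standard semidirect-product realization of $W(D_6)$ as the group of even-signed permutations (citing \cite[Section 2.10]{H}), and your proposal is a sound fleshing-out of exactly those facts --- matching the generator images of Proposition \ref{P630} with the standard Coxeter generators, deducing that the image is the full even-signed permutation group of order $2^5\cdot 6! = 23040$, and concluding faithfulness from Theorem \ref{T610} by comparing orders.
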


Central to the classification of the
three-term relations in the next section is
the notion of $L$-coherence defined as follows:

\begin{Definition}
\label{D650}

If $S$ is a subset of
$G_L \backslash M_L$, we say that $S$ is $L$-coherent
if no two elements of $S$ are interchanged by the action
of the central involution $w_0$. If $S$ is not
$L$-coherent, then $S$ is called
$L$-incoherent. 

\end{Definition}

It is clear that a set $S$ is $L$-coherent
if and only if $S$ does not contain {\it both} 
elements of the
form $i$ and $\bar{i}$, for any 
$i \in \{1, \ldots , 6\}$.

The group $M_L$ acts by right multiplication 
on triples of cosets of
$G_L \backslash M_L$ according to
$$\{i,j,k\} \cdot \mu
=\{i \cdot \mu,j \cdot \mu,k \cdot \mu\},$$
for $i,j,k \in 
G_L \backslash M_L$ and $\mu \in M_L$.

There are 
$\binom{12}{3}=220$
triples of right cosets of
$G_L \backslash M_L$. 
The final proposition of this section 
describes the orbits
of the action of $M_L$ on those triples.
It will be used in the next section 
in the classification of
the three-term relations satisfied 
by the $L$ function. 

\begin{Proposition}
\label{P660}

There are two orbits of the action of $M_L$
on triples of cosets of
$G_L \backslash M_L$. One orbit is of length
160 and consists of all triples that are
$L$-coherent. The other orbit is of length 60 
and consists
of all triples that are $L$-incoherent.

\end{Proposition}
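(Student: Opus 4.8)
The plan is to exploit the explicit description of $M_L$ as a subgroup of $S_{12}$ given in Proposition \ref{P640}, together with the action of the central involution $w_0$, and to count orbits using the orbit-stabilizer theorem. First I would recall that, by Proposition \ref{P620}, the central involution $w_0$ acts on the twelve-element set $\{1,\ldots,6,\bar1,\ldots,\bar6\}$ as the fixed-point-free involution sending each $i$ to $\bar i$. An unordered triple $\{i,j,k\}$ is $L$-incoherent precisely when it contains a ``bar pair'' $\{m,\bar m\}$, and $L$-coherent otherwise. A direct count separates the $220=\binom{12}{3}$ triples into these two classes: the number of triples containing at least one bar pair is obtained by choosing the pair $\{m,\bar m\}$ in $6$ ways and the third element in $10$ ways, giving $60$ (no triple can contain two disjoint bar pairs, since that needs four elements), so there are $60$ $L$-incoherent triples and $220-60=160$ $L$-coherent triples. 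This already matches the claimed orbit lengths, so the remaining task is to show each class is a \emph{single} orbit.

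Next I would establish transitivity on each class. Since $w_0$ is central, its conjugation action is trivial, so right multiplication by $M_L$ commutes with the pairing $i\leftrightarrow\bar i$; hence $M_L$ preserves the property of containing a bar pair, confirming that the two classes are unions of orbits. To prove transitivity within the $L$-coherent class, I would use Proposition \ref{P640}: an element of $M_L\hookrightarrow S_{12}$ is any assignment $i\mapsto j_i$ with the $j_i$ distinct after erasing bars and with an even number of bars. I would show that $M_L$ acts transitively on \emph{ordered} $L$-coherent triples $(i,j,k)$ with the three underlying indices distinct, by sending a fixed reference triple, say $(1,2,3)$, to any target ordered coherent triple; the parity constraint (even number of bars among all six images) is never an obstruction because the images of the three unused coset labels $4,5,6$ can absorb any required parity adjustment by placing or removing a bar on one of them. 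This gives transitivity on unordered coherent triples as well. For the $L$-incoherent class, I would similarly show $M_L$ carries the reference pair-plus-point $\{1,\bar1,2\}$ to any target bar-pair-plus-point configuration, again using the freedom in the images of the unused labels to meet the parity condition.

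As a clean alternative to the explicit transitivity argument, I would verify orbit lengths by the orbit-stabilizer theorem: $|M_L|=23040$, and if I can identify the stabilizer in $M_L$ of a representative triple in each class and check that its order equals $23040/160=144$ in the coherent case and $23040/60=384$ in the incoherent case, then combined with the containment of each orbit inside its class (shown above) and the matching total counts $160$ and $60$, each class must be exactly one orbit. Computing these stabilizers reduces to the signed-permutation description of Proposition \ref{P640}: for the coherent representative $\{1,2,3\}$ the stabilizer consists of elements permuting $\{1,2,3\}$ among themselves without bars (or with bars cancelling in pairs) and acting freely on $\{4,5,6\}$ with the overall even-bar constraint, and one counts these directly; the incoherent representative $\{1,\bar1,2\}$ is handled analogously, where the stabilizer may swap $1\leftrightarrow\bar1$.

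The main obstacle I anticipate is handling the parity (even-number-of-bars) constraint carefully throughout the transitivity and stabilizer counts, since it is exactly this $D_n$-type restriction (as opposed to the full hyperoctahedral group $B_n$) that could in principle break a class into more than one orbit or alter a stabilizer order. I expect it does not, precisely because any triple uses only three of the six underlying indices, leaving the other three images available to fix the global parity; making this ``parity can always be absorbed'' step rigorous — and confirming it for both the coherent and incoherent representatives — is where I would concentrate the technical care. Once that is settled, matching the orbit lengths to the counts $160$ and $60$ completes the proof.
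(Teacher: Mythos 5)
Your proposal is correct and follows essentially the same route as the paper's (very terse) proof: the paper likewise deduces from Proposition \ref{P640} that the $L$-coherent and $L$-incoherent triples each form a single orbit and then counts them, and your count ($6\cdot 10=60$ incoherent, $220-60=160$ coherent) together with the parity-absorption transitivity argument and the stabilizer check ($23040/144=160$, $23040/384=60$) simply supplies the details the paper leaves to the reader.
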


\begin{proof}

From Proposition \ref{P640},
it is easily seen that there are two orbits
given by the $L$-coherent and $L$-incoherent
triples.
A simple counting argument  
establishes the lengths
of the two orbits.

\end{proof}

\section{Three-term relations}

For every $i \in G_L \backslash M_L$,
we define the function
$$L_{i}(\vec{x})=L(\mu\vec{x}),$$
where $\vec{x}=(a,b,c,d,e,f,g)^{T} \in V$
and $\mu$ is any representative of
the right coset $i$. 
A three-term relation involving the functions
$L_i(\vec{x}), L_j(\vec{x})$, and $L_k(\vec{x})$
is said to be of type $\{i,j,k\}$.

\begin{Proposition}
\label{P710}

A three-term relation of 
the $L$-coherent type
$\{6,5,4\}$ is given by
\begin{align}
\label{710}
&\frac{\sin \pi (f-g)}{\Gamma(e-a)\Gamma(e-b)
\Gamma(e-c)\Gamma(e-d)}
L\left[
{\displaystyle a,b,c,d;
\atop
\displaystyle e;f,g}\right] \nonumber \\
+&\frac{\sin \pi (g-e)}{\Gamma(f-a)\Gamma(f-b)
\Gamma(f-c)\Gamma(f-d)}
L\left[
{\displaystyle a,b,c,d;
\atop
\displaystyle f;e,g}\right] \nonumber \\
+&\frac{\sin \pi (e-f)}{\Gamma(g-a)\Gamma(g-b)
\Gamma(g-c)\Gamma(g-d)}
L\left[
{\displaystyle a,b,c,d;
\atop
\displaystyle g;f,e}\right] 
=0.
\end{align}

\end{Proposition}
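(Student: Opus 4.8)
The plan is to establish the three-term relation $(\ref{710})$ by expressing all three $L$ functions through the common Barnes integral representation of Proposition \ref{P310} and reducing the identity to a statement about the integrand. Each of $L\left[{\displaystyle a,b,c,d; \atop \displaystyle e;f,g}\right]$, $L\left[{\displaystyle a,b,c,d; \atop \displaystyle f;e,g}\right]$, and $L\left[{\displaystyle a,b,c,d; \atop \displaystyle g;f,e}\right]$ corresponds to the three right cosets $6$, $5$, $4$, which by the computed representatives $\mu_6=I_7$, $\mu_5=(56)$, $\mu_4=(57)$ amount to cyclically interchanging the role of $e,f,g$ while leaving $a,b,c,d$ fixed. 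The first step is to observe that the Saalsch\"utzian condition $e+f+g-a-b-c-d=1$ is symmetric in $e,f,g$, so all three $L$ functions live on the same hyperplane $V$ and all three Barnes integrals in $(\ref{310})$ share the \emph{same} denominator-parameter structure up to the permutation of $\{e,f,g\}$.

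The key step is to combine the three Barnes integrals into a single contour integral. After pulling out the common gamma-factor prefactor $\frac{1}{\pi \Gamma(a)\Gamma(b)\Gamma(c)\Gamma(d)}$ and accounting for the differing $\Gamma(1+a-e)$-type factors, each $L$ term contributes an integrand of the form $\frac{\Gamma(a+t)\Gamma(b+t)\Gamma(c+t)\Gamma(d+t)\,\Gamma(1-e-t)\Gamma(-t)}{\Gamma(f+t)\Gamma(g+t)}$ with $e,f,g$ cyclically permuted. I would then verify that the three trigonometric coefficients $\sin\pi(f-g)$, $\sin\pi(g-e)$, $\sin\pi(e-f)$ are precisely the factors needed so that the three integrands sum \emph{pointwise} to zero on the contour. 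Concretely, after clearing the prefactors via the reflection formula $(\ref{270})$, the claim reduces to the elementary trigonometric identity
\begin{equation*}
\sin\pi(f-g)\cdot X_e + \sin\pi(g-e)\cdot X_f + \sin\pi(e-f)\cdot X_g = 0,
\end{equation*}
where $X_e, X_f, X_g$ are the common-denominator forms of the three integrands. The combinatorial heart is that $\sin\pi(f-g)+\sin\pi(g-e)+\sin\pi(e-f)$ and its weighted analogues vanish by the standard three-term sine identity $\sin(\alpha-\beta)+\sin(\beta-\gamma)+\sin(\gamma-\alpha) = -4\sin\frac{\alpha-\beta}{2}\sin\frac{\beta-\gamma}{2}\sin\frac{\gamma-\alpha}{2}\cdot(\text{phase})$, applied after the common $1/(\Gamma(e+t)\Gamma(f+t)\Gamma(g+t))$ factor is extracted.

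The main obstacle I anticipate is the bookkeeping of the prefactors: when I rewrite each $L$ term as a Barnes integral, the prefactor $\frac{1}{\Gamma(1+a-e)\Gamma(1+b-e)\Gamma(1+c-e)\Gamma(1+d-e)}$ differs across the three terms (with $e$ replaced by $f$ or $g$), and these must be reconciled with the coefficients $\frac{\sin\pi(f-g)}{\Gamma(e-a)\Gamma(e-b)\Gamma(e-c)\Gamma(e-d)}$ appearing in $(\ref{710})$. The reflection formula converts $\Gamma(e-a)$ into $\pi/(\sin\pi(a-e)\,\Gamma(1+a-e))$, which is exactly what transforms the stated coefficients into a form matching the Barnes prefactors; checking that every $\sin$ factor and every sign lines up is where the real care is required. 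Once the three integrands are shown to cancel pointwise on the contour, the integral of the sum is zero, and dividing back out the common prefactor yields $(\ref{710})$. An alternative route, should the direct cancellation prove unwieldy, is to apply the fundamental two-term relation $(\ref{340})$ together with the invariances of Theorem \ref{T420} to rewrite two of the three terms in a common form and then invoke a known three-term relation for ${}_4F_3(1)$ series, but I expect the unified-contour approach to be cleaner.
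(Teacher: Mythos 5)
Your central claim---that the three Barnes integrands, weighted by $\sin\pi(f-g)$, $\sin\pi(g-e)$, $\sin\pi(e-f)$ together with the gamma prefactors, cancel \emph{pointwise} on the contour---is false, and this is the heart of the matter. Carrying out the bookkeeping you describe, i.e.\ writing each $L$ via (\ref{310}) and applying (\ref{270}) to the products $\Gamma(e-x)\Gamma(1+x-e)$, the first term of (\ref{710}) becomes a constant multiple of
\[
\sin\pi(f-g)\prod_{x\in\{a,b,c,d\}}\sin\pi(e-x)\cdot
\frac{1}{2\pi i}\int_t\frac{\Gamma(a+t)\Gamma(b+t)\Gamma(c+t)\Gamma(d+t)\,\Gamma(-t)}{\sin\pi(e+t)\,\Gamma(e+t)\Gamma(f+t)\Gamma(g+t)}\,dt,
\]
and cyclically for the other two. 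Pointwise cancellation would therefore require
\[
\sum_{\mathrm{cyc}}\frac{\sin\pi(f-g)\prod_{x\in\{a,b,c,d\}}\sin\pi(e-x)}{\sin\pi(e+t)}\equiv 0
\]
identically in $t$ and the parameters, which fails: with $a=b=c=d=0$, $e=\tfrac12$, $f=\tfrac13$, $g=\tfrac16$, $t=\tfrac1{12}$, the left side is approximately $0.058\neq 0$. (Indeed, if the integrands cancelled pointwise, the residues at each individual pole would cancel, producing false two-term identities among single ${}_4F_3(1)$ series; relation (\ref{710}) is true only after integration.) The sine identity you invoke, $\sin(\alpha-\beta)+\sin(\beta-\gamma)+\sin(\gamma-\alpha)=-4\sin\frac{\alpha-\beta}{2}\sin\frac{\beta-\gamma}{2}\sin\frac{\gamma-\alpha}{2}$, has a manifestly nonzero right-hand side, so it cannot supply the vanishing you need; the identities that do hold (such as $\sum_{\mathrm{cyc}}\sin\pi(f-g)\sin\pi e=0$) are not the ones your reduction requires.

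An integrand-level cancellation does work, but for a \emph{different} triple, and this is what the paper exploits. One starts from the single integrand $\Gamma(a+t)\cdots\Gamma(d+t)\Gamma(1-e-t)\Gamma(1-f-t)\Gamma(-t)/\Gamma(g+t)$ multiplied by the identically vanishing bracket $\sin\pi e\sin\pi(1-f-t)-\sin\pi f\sin\pi(1-e-t)-\sin\pi(f-e)\sin\pi(-t)$; the reflection formula converts the three pieces into the Barnes integrands of $L\left[{a,b,c,d;\atop e;f,g}\right]$, $L\left[{a,b,c,d;\atop f;e,g}\right]$, and (after the shift $t\mapsto t+1-e$) $L\left[{1-a,1-b,1-c,1-d;\atop 2-g;2-f,2-e}\right]$. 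This yields the relation (\ref{720}), which is of type $\{6,5,\bar{4}\}$, not $\{6,5,4\}$: the third function is of barred type and is not an invariance-image of $L\left[{a,b,c,d;\atop g;f,e}\right]$. The indispensable final step is group-theoretic: applying the change of variable $\vec{x}\mapsto\mu\vec{x}$ with $\mu=(14)(23)[(123)A]^3$, which sends $\{6,5,\bar{4}\}$ to $\{6,5,4\}$, transforms (\ref{720}) into (\ref{710}). Your proposal omits both the extra $\Gamma(1-f-t)$-type factor needed in the third integrand and this transport step, and the fallback you sketch (invoking ``a known three-term relation for ${}_4F_3(1)$ series'') is not developed, so as written the argument does not prove the proposition.
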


\begin{proof}

We have
\begin{eqnarray*}
&&\frac{1}{2 \pi i} \int_t
\frac{\Gamma(a+t)\Gamma(b+t)\Gamma(c+t)
\Gamma(d+t)\Gamma(1-e-t)\Gamma(1-f-t)
\Gamma(-t)}{\Gamma(g+t)}\\
&&\cdot [
\sin \pi e \sin \pi (1-f-t)
-\sin \pi f \sin \pi (1-e-t)
-\sin \pi (f-e) \sin \pi (-t)]dt\\
&&=0,
\end{eqnarray*}
since the quantity in square brackets in the above
integral is equal to zero, which
can be seen by applying
elementary trigonometric identities.
We break up the integral into three parts and 
use (\ref{270}) to simplify the result to
obtain
\begin{align*}
&\frac{1}{2 \pi i} \int_t
\frac{\sin \pi e 
\Gamma(a+t)\Gamma(b+t)\Gamma(c+t)
\Gamma(d+t)\Gamma(1-e-t)
\Gamma(-t)}{\Gamma(f+t)\Gamma(g+t)}dt\\
-&\frac{1}{2 \pi i} \int_t
\frac{\sin \pi f 
\Gamma(a+t)\Gamma(b+t)\Gamma(c+t)
\Gamma(d+t)\Gamma(1-f-t)
\Gamma(-t)}{\Gamma(e+t)\Gamma(g+t)}dt\\
-&\frac{1}{2 \pi i} \int_t
\frac{\left[
{\displaystyle
\sin \pi (f-e) 
\Gamma(a+t)\Gamma(b+t)\Gamma(c+t)
\atop
\displaystyle
\cdot
\Gamma(d+t)\Gamma(1-e-t)\Gamma(1-f-t)}\right]}
{\Gamma(1+t)\Gamma(g+t)}dt
=0.
\end{align*}

After the substitution 
$t \mapsto t+1-e$ in the third integral,
we express
each of the three integrals 
as an $L$ function
according to (\ref{310}).
The end result is a three-term relation of
type $\{6,5,\bar{4}\}$ which can be written as
\begin{align}
\label{720}
&\frac{\sin \pi e}{\Gamma(1+a-f)\Gamma(1+b-f)
\Gamma(1+c-f)\Gamma(1+d-f)}
L\left[
{\displaystyle a,b,c,d;
\atop
\displaystyle
e;f,g}\right] \nonumber \\
+&\frac{\sin \pi (-f)}{\Gamma(1+a-e)\Gamma(1+b-e)
\Gamma(1+c-e)\Gamma(1+d-e)}
L\left[
{\displaystyle a,b,c,d;
\atop
\displaystyle f;e,g}\right] \nonumber \\
+&\frac{\sin \pi (e-f)}{\Gamma(a)\Gamma(b)
\Gamma(c)\Gamma(d)}
L\left[
{\displaystyle 1-a,1-b,1-c,1-d;
\atop
\displaystyle 2-g;2-f,2-e}\right]
=0.
\end{align}

If we let
$$\mu=(14)(23)[(123)A]^3 \in M_L,$$
we have
$$\{6,5,\bar{4}\} \cdot \mu
= \{6,5,4\}.$$
Applying the change of variable
$\vec{x} \mapsto \mu \vec{x}$
to all terms and coefficients
in the relation (\ref{720}) 
yields the result.

\end{proof}

If $\vec{x}=(a,b,c,d,e,f,g)^T \in V$,
we define $\gamma_1(\vec{x}), \gamma_2(\vec{x})$,
and $\gamma_3(\vec{x})$ to be the respective
coefficients in front of the three $L$
functions in (\ref{710}).
This way the three-term relation 
(\ref{710}) can be written as
\begin{equation}
\label{730}
\gamma_1(\vec{x})L_{6}(\vec{x})
+\gamma_2(\vec{x})L_{5}(\vec{x})
+\gamma_3(\vec{x})L_{4}(\vec{x})
=0,
\end{equation}
or, equivalently, as
\begin{equation}
\label{732}
\sum_{i=0}^{2}\gamma_1((576)^i \vec{x})L((576)^i \vec{x})
=0.
\end{equation}

Let $\{i,j,k\}$ be any $L$-coherent triple. Since,
by Proposition \ref{P660}, 
$\{i,j,k\}$ is in the same orbit as $\{6,5,4\}$,
there exists $\mu \in M_L$ such that
$$\{6,5,4\} \cdot \mu = \{i,j,k\}.$$
Then a three-term relation of type $\{i,j,k\}$
is given by
\begin{equation}
\label{735}
\gamma_1(\mu\vec{x})L_{i}(\vec{x})
+\gamma_2(\mu\vec{x})L_{j}(\vec{x})
+\gamma_3(\mu\vec{x})L_{k}(\vec{x})
=0,
\end{equation}
or, equivalently, by
\begin{equation}
\label{737}
\sum_{i=0}^{2}\gamma_1((576)^i \mu \vec{x})
L((576)^i \mu \vec{x})
=0.
\end{equation}

\begin{Proposition}
\label{P720}

A three-term relation of the $L$-incoherent
type $\{6,5,\bar{6}\}$
is given by
\begin{align}
\label{740}
&\frac{\sin \pi g \sin \pi (f-g)}
{\left[{\displaystyle \Gamma(1+a-f)\Gamma(1+b-f)
\Gamma(1+c-f)
\atop
\displaystyle
\cdot\Gamma(1+d-f)
\Gamma(e-a)\Gamma(e-b)\Gamma(e-c)\Gamma(e-d)}
\right]} \nonumber \\
\cdot &L\left[
{\displaystyle a,b,c,d;
\atop
\displaystyle e;f,g}\right] \nonumber \\
+&\frac{1}{\pi^4}\left(\left[
{\displaystyle \sin \pi g \sin \pi (g-e) \sin \pi (f-a)
\atop
\displaystyle\cdot
\sin \pi (f-b) \sin \pi (f-c) \sin \pi (f-d)}\right]
\right. \nonumber \\
&\quad +\left.\left[
{\displaystyle \sin \pi f \sin \pi (e-f) \sin \pi (g-a)
\atop
\displaystyle \cdot
\sin \pi (g-b) \sin \pi (g-c) \sin \pi (g-d)}
\right]\right) \nonumber \\
\cdot &L\left[
{\displaystyle a,b,c,d;
\atop
\displaystyle f;e,g}\right] \nonumber \\
+&\frac{\sin \pi (e-f) \sin \pi (f-g)}
{\Gamma(a)\Gamma(b)\Gamma(c)\Gamma(d)
\Gamma(g-a)\Gamma(g-b)\Gamma(g-c)\Gamma(g-d)} \nonumber \\
\cdot &L\left[
{\displaystyle 1-a,1-b,1-c,1-d;
\atop
\displaystyle 2-e;2-f,2-g}\right]
=0.
\end{align}

\end{Proposition}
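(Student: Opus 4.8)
The plan is to obtain the incoherent relation (\ref{740}) not by a fresh single Barnes-integral computation, but by linearly combining two of the coherent three-term relations already available and eliminating their common term. The structural clue that forces this route is the shape of the middle coefficient: in both coherent relations (\ref{710}) and (\ref{720}) every coefficient is a single sine-over-gamma monomial, whereas the coefficient of the middle function in (\ref{740}) is a genuine sum of two six-fold sine products. A sum of this form cannot be produced by one trigonometric identity under one integral sign (as in the proof of Proposition \ref{P710}); it is exactly the signature of an elimination step, in which solving one relation for a shared function and substituting into a second turns the surviving coefficient into a difference of cross-products of the original coefficients.

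Concretely, I would begin from the two relations (\ref{710}), of type $\{6,5,4\}$, and (\ref{720}), of type $\{6,5,\bar{4}\}$, which share the functions $L_6(\vec{x})=L(\vec{x})$ and $L_5(\vec{x})=L(a,b,c,d;f;e,g)$. Writing (\ref{710}) as $\alpha_1 L_6+\alpha_2 L_5+\alpha_3 L_4=0$ and (\ref{720}) as $\beta_1 L_6+\beta_2 L_5+\beta_3 L_{\bar{4}}=0$, I would eliminate the common term $L_6$ by forming $\beta_1$ times~(\ref{710}) minus $\alpha_1$ times~(\ref{720}). This yields
\[
(\beta_1\alpha_2-\alpha_1\beta_2)\,L_5+\beta_1\alpha_3\,L_4-\alpha_1\beta_3\,L_{\bar{4}}=0,
\]
a three-term relation of the incoherent type $\{5,4,\bar{4}\}$ (note the $w_0$-pair $\{4,\bar{4}\}$), whose $L_5$-coefficient is already a difference of products. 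One checks that this coefficient is not identically zero, so the result is a genuine three-term relation; this holds because (\ref{710}) and (\ref{720}) are distinct relations of different types.

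To reach the requested type $\{6,5,\bar{6}\}$, I would then apply a single change of variable $\vec{x}\mapsto\mu\vec{x}$, with $\mu\in M_L$ chosen so that $\{5,4,\bar{4}\}\cdot\mu=\{6,5,\bar{6}\}$; such a $\mu$ exists since, by Proposition \ref{P660}, all incoherent triples lie in one $M_L$-orbit, and $\mu$ may be taken to carry the $w_0$-pair $\{4,\bar{4}\}$ onto the $w_0$-pair $\{6,\bar{6}\}$ while fixing the coset $5$ (for instance the element with $\Phi(\mu)=(46)$). Applying this substitution to all terms and coefficients, exactly as in the passage from (\ref{720}) to (\ref{710}), sends $L_4,L_{\bar{4}}$ to $L_6,L_{\bar{6}}$ and leaves $L_5$ in the middle, matching the layout of (\ref{740}).

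The main obstacle is purely computational: reducing the transformed coefficients to the exact forms printed in (\ref{740}). I expect to use the reflection formula (\ref{270}) repeatedly to trade the gamma quotients in $\alpha_i,\beta_j$ for sines, each such trade contributing a factor of $\pi$, which is what accounts for the $\tfrac{1}{\pi^4}$ normalization of the middle coefficient, together with the elementary product-to-sum identities to collapse $\beta_1\alpha_2-\alpha_1\beta_2$ into the bracketed sum of two six-sine products. Verifying that the two outer coefficients reduce to $\sin\pi g\,\sin\pi(f-g)$ and $\sin\pi(e-f)\,\sin\pi(f-g)$ over the displayed gamma products, and that all spurious common factors cancel, is the delicate bookkeeping; everything upstream of it is structural. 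As a consistency check one expects that, three-term relations of a given type being determined up to an overall scalar, any genuine relation of type $\{6,5,\bar{6}\}$ produced this way must coincide with (\ref{740}) after normalization.
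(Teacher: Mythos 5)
Your proposal is correct and rests on the same idea as the paper's proof: the incoherent relation is obtained by eliminating the shared $L$ function between two coherent three-term relations (both instances of (\ref{710})/(\ref{720})) and, where needed, transporting the result by a change of variable $\vec{x}\mapsto\mu\vec{x}$, with existence of the required $\mu$ guaranteed by Propositions \ref{P640} and \ref{P660}. The paper's implementation differs only in bookkeeping: it applies $\mu=(14)(23)[(123)A]^3(576)$ to (\ref{730}) to produce a relation of type $\{5,4,\bar{6}\}$ and cancels $L_4$ against (\ref{730}) itself, landing directly on type $\{6,5,\bar{6}\}$ with no final substitution, whereas you cancel $L_6$ between (\ref{710}) and (\ref{720}) to reach type $\{5,4,\bar{4}\}$ and then move it to $\{6,5,\bar{6}\}$ via the element with $\Phi(\mu)=(46)$.
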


\begin{proof}

Let
$$\mu=(14)(23)[(123)A]^3(576) \in M_L.$$
We have
$$\{6,5,4\} \cdot \mu 
=\{5,4,\bar{6}\}.$$
This and (\ref{730}) 
lead to a three-term relation
of type $\{5,4,\bar{6}\}$ given by
$$\gamma_1(\mu\vec{x})L_{5}(\vec{x})
+\gamma_2(\mu\vec{x})L_{4}(\vec{x})
+\gamma_3(\mu\vec{x})L_{\bar{6}}(\vec{x})
=0.$$
We combine the above three-term relation
of type $\{5,4,\bar{6}\}$
with the three-term
relation of type $\{6,5,4\}$ given in
(\ref{730}) and cancel the terms involving
the function $L_4(\vec{x})$ to obtain
(\ref{740}). 

\end{proof}

If $\vec{x}=(a,b,c,d,e,f,g)^T \in V$,
we define $\beta_1(\vec{x}), \beta_2(\vec{x})$,
and $\beta_3(\vec{x})$ to be the respective
coefficients in front of the three $L$
functions in (\ref{740}).
This way the three-term relation 
(\ref{740}) can be written as
\begin{equation}
\label{750}
\beta_1(\vec{x})L_{6}(\vec{x})
+\beta_2(\vec{x})L_{5}(\vec{x})
+\beta_3(\vec{x})L_{\bar{6}}(\vec{x})
=0.
\end{equation}

Let $\{i,j,\bar{i}\}$ be any triple
that is $L$-incoherent. By Proposition
\ref{P660}, 
$\{i,j,\bar{i}\}$ is in the same orbit as 
$\{6,5,\bar{6}\}$ and so
there exists $\mu \in M_L$ such that
$$\{6,5,\bar{6}\} \cdot \mu = \{i,j,\bar{i}\}.$$
Then a three-term relation of type 
$\{i,j,\bar{i}\}$
is given by
\begin{equation}
\label{755}
\beta_1(\mu\vec{x})L_{i}(\vec{x})
+\beta_2(\mu\vec{x})L_{j}(\vec{x})
+\beta_3(\mu\vec{x})L_{\bar{i}}(\vec{x})
=0.
\end{equation}

To summarize our results, we have shown that
for every $\sigma_1,\sigma_2,\sigma_3 \in M_L$
such that $\sigma_1,\sigma_2,\sigma_3$ are in
different right cosets of $G_L$ in $M_L$,
there exists a three-term relation involving
the functions
$L(\sigma_1\vec{x}),L(\sigma_2\vec{x})$,
and $L(\sigma_3\vec{x})$.
We have a total of 220 three-term relations.
The three-term relations fall into two 
families based on $L$-coherence. The two fundamental
three-term relations for $L$-coherent
and for $L$-incoherent triples are given in
(\ref{730}) and (\ref{750}) respectively.
Any other three-term relation can be obtained from
one of those two through a change of variable
of the form 
$\vec{x} \mapsto \mu \vec{x}$ applied 
to all terms and coefficients. 
The result is a three-term relation
of the form (\ref{735}) or (\ref{755}). The 
appropriate matrix
$\mu \in M_L$ can be found through Proposition
\ref{P630}, 
which describes the actions of the generators
$a_1,a_2,a_3,a_4,a_5,a_{1'}$ of the group $M_L$.

\end{document}